\newcommand{\ch}{\boldsymbol{\mathcal{X}^\prime}}
\newcommand{\primesubmod}{{\mbox{\rm\texttt{Spec}}}}
\newcommand{\ideals}{{\mbox{\rm\texttt{Id}}}}
\DeclareMathOperator{\ann}{ann}
\newcommand{\overr}{{\mbox{\rm\texttt{Overr}}}}
\newcommand{\spec}{{\mbox{\rm \texttt{Spec}}}}
\newcommand{\rd}{{\mbox{\rm \texttt{Rd}}}}
\newcommand{\submod}{{\mbox{\rm\texttt{SMod}}}}
\newcommand{\bV}{\boldsymbol{V}}
\newcommand{\bD}{\boldsymbol{D}}
\newcommand{\SSS}{\boldsymbol{\overline{\texttt{S}}}}
\newcommand{\SSN}{\boldsymbol{{\texttt{S}}}}
\newcommand{\bc}{\color{blue}}%
     \newcommand{\insZ}{\mathbb Z}
 \newcommand{\insQ}{\mathbb Q}
\newcommand{\ms}{\mathscr}
\newcommand{\z}{{\ldots}}
\newcommand{\xcal}{{\boldsymbol{\mathcal{X}}}}
\newcommand{\ucal}{{\boldsymbol{\mathcal{U}}}}
    \newcommand{\FF}{\boldsymbol{\overline{F}}}
     \newcommand{\FFO}{\boldsymbol{\overline{\mathscr{F}}}}
                \newcommand{\FO}{\boldsymbol{{\mathscr{F}}}}
\newcommand{\f}{\mathfrak}
    \DeclareMathOperator{\chius}{\mbox{\texttt{Cl}}}
\newtheoremstyle{mio}%
	{}{} 
	{\itshape}{} 
	{\bfseries}{.}{ } 
	{#1 #2\thmnote{\mdseries~(\scshape #3)}} 
\theoremstyle{mio}
\newtheorem{teor}{Theorem}[section]
\newtheorem{cor}[teor]{Corollary}
\newtheorem{prop}[teor]{Proposition}
\theoremstyle{definition}
\newtheorem{oss}[teor]{Remark}
\DeclareMathOperator{\Cl}{\mbox{\texttt{Cl}}}
\DeclareMathOperator{\rad}{rad}
\begin{document}     
\title[]
{A topological version of\\ Hilbert's Nullstellensatz}


\author{Carmelo A. Finocchiaro}

\noindent \address{C.A. Finocchiaro\\
	Institute of Analysis and Number Theory\\
	University of Technology\\
	Steyrergasse 30/II, 8010 Graz\\
	Austria}
	
\noindent \email{finocchiaro@math.tugraz.at.}	
	
\author{Marco Fontana}

\author{Dario Spirito}

\noindent \address{M. Fontana and D. Spirito,  Dipartimento di Matematica e Fisica, Universit\`a degli Studi
``Roma Tre'', Largo San Leonardo Murialdo, 1, 00146 Roma, Italy}

\noindent \email{$\;\;\;\;$ fontana@mat.uniroma3.it  $ \;\;\;\;\:\;$  spirito@mat.uniroma3.it}

\keywords{Spectral space, spectral map, Zariski topology, inverse topology, hull-kernel topology, closure operation, radical ideal}

\subjclass[2010]{13A10, 13A15, 13G05, 13B10, 13E99, 14A05}

\thanks{This work was partially supported by {\sl GNSAGA} of {\sl Istituto Nazionale di Alta Matematica}.\\ The first named author was also  supported by a Post Doc Grant from the University of Technology of Graz, Austria.}

\begin{abstract}
 We prove that the space of radical ideals of   a ring  $R$, endowed with the hull-kernel topology, is a spectral space,  and that it is canonically homeomorphic to the space of the non-empty Zariski closed subspaces of $\spec(R)$, endowed with a Zariski-like topology.
\end{abstract}

\maketitle

\section{Introduction and preliminaries}

Hilbert's Nullstellensatz establishes a fundamental relationship between geometry and algebra, relating  algebraic sets in affine spaces to radical ideals in polynomial rings over algebraically closed fields.
On the other hand, for any ring $R$,   the set   of radical ideals of $R$ can be thought as a set of representatives of the closed sets of $X:=\spec(R)$, in the sense that the map $\ms J$, sending a closed set $C$ of $X$ to the radical ideal $\ms J(C):=\bigcap\{P\mid P\in C\}$, is a natural order-reversing bijection, having as inverse   the map  $\texttt{V}$ defined by sending a radical ideal $H $ of $R$ to the Zariski closed subspace $\texttt{V}(H):= \{ P\in \spec({R}) \mid H \subseteq P \}$ of  $X$.

 In the present paper, we will put into a topological perspective the relationship between the  geometry of $\spec(R)$ and ideal theory of $R$,  sheding new light onto the Nullstellensatz-type correspondence established by the maps $\ms J$ and $\texttt{V}$.

  Precisely, we consider   $\rd(R) := \{H \mbox{ ideal of } R \mid H = \rad(H) \subsetneq R\}$ endowed with the so called \emph{hull-kernel topology}, that is the topology defined by taking, as a   subbasis  of open sets, the collection of all the subsets of the form $\{ H \in\rd(R) \mid  x_1,\z,x_n\notin  H \}$, for $x_1,\z,x_n$ varying in the ring $R$. In this situation, we show that $\rd(R)^{{\mbox{\tiny{\texttt{hk}}}}}$ (i.e., $\rd(R)$ with hull-kernel topology)  is a spectral space (after Hochster \cite{ho}), using a general approach described below.  
  On the other hand, we introduce a natural topology, called the \emph{Zariski topology}, on the space   $\boldsymbol{\mathcal X'}({R})$ of all the nonempty closed subspaces of the spectral space $\spec(R)$, by declaring as a basis of open sets the collection of the sets of the form 
  $$\boldsymbol{\mathcal U'}(\Omega):=\{C\in\boldsymbol{\mathcal X'}(R)\mid C\cap \Omega=\emptyset \},$$
where $\Omega$ runs in the family of all  quasi-compact open subspaces of $\spec(R)$.
 
In such a way, $\boldsymbol{\mathcal X'}(R)$  becomes a T$_0$ topological space which can be considered as a natural order-reversing  topological extension of $\spec(R)$. More precisely,  if $\spec(R)$ is endowed with the inverse topology  (as defined by Hochster; the definition will be recalled later), then the natural map 
$ \varphi': \spec(R) \rightarrow \boldsymbol{\mathcal X'}(R)$, $P\mapsto \chius(\{P\})$, turns out to be a topological embedding, where $\chius(\{P\})$ denotes the Zariski closure in $\spec(R)$ of the singleton  $\{P\}$, i.e.,  $\chius(\{P\}) = \texttt{V}(P)$.

Among the main results of the present paper, we show that the topological space $\boldsymbol{\mathcal X'}(R)$, endowed with the Zariski topology (denoted by $\boldsymbol{\mathcal X'}(R)^{{\mbox{\tiny{\texttt{zar}}}}}$), is a spectral space. 
  By  linking  algebraic and topological properties, we show that $\mathscr{J}$ estabilishes a homeomorphism between $\boldsymbol{\mathcal{X}^\prime}({R})^{\mbox{\rm\tiny\texttt{inv}}}$ (that is, $\boldsymbol{\mathcal X'}(R)$ endowed with the inverse topology) and  $\rd({R})^{{\mbox{\tiny{\texttt{hk}}}}}$ (Theorem \ref{prop:rad-x1}).

The topological properties that we prove concerning the space $\rd(R)$ are obtained as par\-ticular cases of a more general construction. Indeed, given a $R$-module $M$, we define in a standard way the hull-kernel topology on the set $\submod(M|R)$ of all $R$-submodules of $M$, and we prove that this topological space is a spectral space, by using  a characterization based on ultrafilters. Then, we focus on the subspace $\spec_R(M)$ of $\submod(M|R)$ given by the prime $R$-submodules of $M$ (definition recalled later), and we show that $\spec_R(M)$ is spectral if and only if it is quasi-compact; this happens, for example, when $M$ is finitely generated. 
Among other facts,   we investigate  whether some distinguished subspaces of $\submod(M|R)$ are closed, with respect to the constructible topology. 
We show that this happens to the space $\submod^c(M|R):=\{N\in \submod(M|R) \mid N=N^c \}$, 
where $c:\submod(R|M)\rightarrow\submod(R|M)$, $N\mapsto N^c$, is a closure operation of finite type;
 in particular, it is a spectral space, with the hull-kernel topology. Thus, keeping in mind that the set of all ideals of $R$, denoted by  $\ideals(R)$, coincides with the spectral space $\submod(R|R)$ and that  the mapping $\rad:\ideals(R)\rightarrow \ideals(R)$  (sending an ideal $I$ of $R$ to its radical) is a closure operation of finite type, we deduce that $\rd(R)$ (with the hull-kernel topology) is a spectral space.  Furthermore, we show that the Krull dimension of this spectral space can be evaluated by the formula $\dim (\mbox{\rm\texttt{Rd}}({R}))=|\spec(R)|-1\geq \dim(\spec(R))$.
 
\smallskip

In the following, we will freely use some well known facts on spectral spaces \cite{ho}. However, for  convenience of the reader 
we recall now briefly some basic definitions and background material.
\subsection{\bf Spectral spaces}
Let $X$ be a topological space. According to \cite{ho}, $X$ is called a {\it spectral space} if there exists a ring $R$ such that $\spec({R})$, with the Zariski topology, is homeomorphic to $X$. 
Spectral spaces  can be characterized in a purely topological way:   a topological space $X$ is spectral  if and only if $X$ is T$_0$ (this means that 
for every pair of distinct points of $X$, at least one of them has an open neighborhood not containing the other), 
quasi-compact  (i.e., any open cover of $X$ admits a finite subcover), admits a basis of quasi-compact open subspaces that is closed under finite intersections, and every irreducible closed  subspace $C$ of $X$ has a unique generic point (i.e., there exists a unique point $x_C\in C$ such that $C$ coincides with the closure of this point)    \cite[Proposition 4]{ho}.

\subsection{\bf The inverse topology on a spectral space.}

Let $X$ be a topological space and let $Y$ be any subset of $X$. We denote by $\Cl(Y)$ the closure of $Y$ in the topological space $X$.  Recall  that the topology on $X$ induces a natural preorder $\leq$ on $X$, defined  by setting $x\leq y$ if $y\in\Cl(\{x\})$.   It is straightforward   to see that $\leq$ is a partial order if and only if $X$ is a T$_0$ space (e.g., this holds when $X$ is spectral). The set
$
Y^{{\mbox{\tiny{\texttt{gen}}}}}:=  \{x\in X \mid  y\in \chius(\{x\}),\mbox{ for some }y\in Y \}
$
is called \textit{closure under generizations of $Y$}. Similarly, using the opposite order,  the set
$
Y^{{\mbox{\tiny{\texttt{sp}}}}}:= \{x\in X \mid  x\in \chius(\{y\}),\mbox{ for some }y\in Y \}
$
is called \textit{closure under specia\-li\-zations of $Y$}. We say that $Y$ is \textit{closed under generi\-zations}  (respectively, \textit{closed under specia\-li\-zations})  if $Y= Y^{{\mbox{\tiny{\texttt{gen}}}}}$ (respectively, $Y=Y^{{\mbox{\tiny{\texttt{sp}}}}}$). 
  For any two elements $x, y$ in a spectral space $X$, we have: 
$$
x \leq y \quad \Leftrightarrow \quad \{x \}^{{\mbox{\tiny{\texttt{gen}}}}} \subseteq \{y \}^{{\mbox{\tiny{\texttt{gen}}}}} \quad \Leftrightarrow \quad
\{x \}^{{\mbox{\tiny{\texttt{sp}}}}} \supseteq \{y \}^{{\mbox{\tiny{\texttt{sp}}}}}\,.
$$
Suppose that $X$ is a spectral space, then  $X$ can be endowed with another topology, introduced by Hochster \cite[Proposition 8]{ho}, whose basis of closed sets is the collection of all the 
quasi-compact open subspaces of $X$. 
This topology is called \emph{the inverse topology on $X$}  (called also \emph{the $O$-topology in \cite{pic}};  see also \cite{he}). For a subset $Y$ of $X$, let $\Cl^{{\mbox{\tiny{\texttt{inv}}}}}(Y)$ 
be the closure of $Y$, in the inverse topology of $X$;  we denote by $X^{{\mbox{\tiny{\texttt{inv}}}}}$ the set $X$, equipped with the inverse topology. The name given to this new topology is due to the fact that,  given $x,y\in X$,    $x\in \Cl^{{\mbox{\tiny{\texttt{inv}}}}}(\{y\})$ if and only if $y\in \Cl(\{x\})$, i.e., the partial order induced by the inverse topology is the opposite order of the partial order  induced by the given spectral topology \cite[Proposition 8]{ho}.  

 By definition, for any subset $Y$ of $X$, we have
 $$
 \chius^{{\mbox{\tiny{\texttt{inv}}}}}(Y) :=
\bigcap \{U   \mid  \mbox{ $U$ open and quasi-compact in } X,  \; 
U  \supseteq Y \}\,.
$$ 
In particular, keeping in mind that the inverse topology reverses the order of the given spectral topology, it follows that the closure under generizations $\{x\}^{{\mbox{\tiny{\texttt{gen}}}}}$ of a singleton is closed in the inverse topology  of $X$, since   $$\{x \}^{{\mbox{\tiny{\texttt{gen}}}}}=\Cl^{{\mbox{\tiny{\texttt{inv}}}}}(\{x\})=\bigcap\{U \mid  U \subseteq X\mbox{ quasi-compact and open},\, x\in U \}$$ \cite[Proposition 8]{ho}.  On the other hand, it is trivial, by the definition,  that the closure under specializations of a singleton $\{x \}^{{\mbox{\tiny{\texttt{sp}}}}}$ is closed in the given topology of $X$, since $\{x \}^{{\mbox{\tiny{\texttt{sp}}}}}= \chius(\{x\})$. 

For recent developments in the use of the inverse topology in Commutative Algebra and spaces of valuation domains see, for example, \cite{olberding}.

\subsection{The constructible topology on a spectral space.} Let $X$ be a spectral space. As it is well known, the topology of $X$ is Hausdorff if and only if $X$ is zero-dimensional. Following \cite{EGA}, there is a natural way to refine the topology of $X$ in order to make $X$ an Hausdorff space without losing compactess. Precisely, define \textit{the constructible topology on $X$} to be the coarsest topology for which the quasi-compact open subspaces of $X$ form a collection of clopen sets. In this way, $X$ becomes a  totally disconnected Hausdorff  spectral space. Let $X^{\mbox{\tiny\texttt{cons}}}$ denote the set $X$ endowed with the constructible topology. By \cite[Proposition 9]{ho}, any closed subset of $X^{\mbox{\tiny\texttt{cons}}}$ is a spectral subspace of $X$ (with respect to the original spectral topology). Thus, in particular, any  quasi-compact open subspace $\Omega$ of $X$ is spectral,  since $\Omega$   is  clopen in the constructible topology, by definition. It is, in general, not so easy to describe the closed sets of $X^{\mbox{\tiny\texttt{cons}}}$. The following results provides both a criterion to characterize when a topological space $X$ is spectral and to  characterize the closed sets of $X^{\mbox{\tiny\texttt{cons}}}$.   This result is based on the use of ultrafilters. For background material on this topic and application of ultrafilters to Commutative Ring Theory see, for example, \cite{lo} and \cite{sch}.  

\begin{teor}{\cite[Corollary 3.3]{Fi}} \label{spectral-ultra} Let $X$ be a topological space. 

\begin{enumerate}
\item[\rm (1)] The following conditions are equivalent. 
{
\begin{enumerate}
\item[\rm (i)] $X$ is a spectral space.
\item[\rm (ii)]There exists a subbasis $\mathcal S$ of $X$ such that, for any ultrafilter $\ms U$ on $X$, the  set
$$
X(\ms U):=\{x\in X \mid [\forall S\in \mathcal S, \mbox{ the following holds: } \; x\in S \Leftrightarrow S\in \ms U] \}
$$
is nonempty. 
\end{enumerate}
}
 \item[\rm (2)] If the previous equivalent conditions hold and $\mathcal S$ is as in \emph{(ii)}, then 
a subset $Y$ of $X$ is closed, with respect to the constructible topology, if and only if for any ultrafilter $\ms V$ on $Y$ we have
$$
Y(\ms V):=\{x\in X \mid [\forall S\in \mathcal S \mbox{ the following holds: } x\in S \Leftrightarrow S\cap Y\in \ms V] \}\subseteq Y .
$$
\end{enumerate}
\end{teor}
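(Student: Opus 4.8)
The plan is to make everything rest on the interplay between ultrafilter convergence and the constructible (patch) topology $X^{\mbox{\tiny\texttt{cons}}}$, which is compact and Hausdorff. I would prove the two implications of part (1) separately, choosing as my only genuine tool the fact that ultrafilters converge in compact spaces, together with the Alexander Subbasis Lemma; part (2) then falls out of the same circle of ideas, read as a statement about patch-limit points.

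For $(i)\Rightarrow(ii)$, assume $X$ is spectral and take for $\mathcal S$ a subbasis consisting of quasi-compact open subspaces, which exists by the Hochster characterization recalled above (a basis of quasi-compact opens is in particular a subbasis). By construction every $S\in\mathcal S$ is clopen in $X^{\mbox{\tiny\texttt{cons}}}$, and $X^{\mbox{\tiny\texttt{cons}}}$ is compact and Hausdorff. Fix an ultrafilter $\ms U$ on $X$. Since $X^{\mbox{\tiny\texttt{cons}}}$ is compact, $\ms U$ converges, for the constructible topology, to some point $x$, and I would check that $x\in X(\ms U)$: for $S\in\mathcal S$ both $S$ and $X\setminus S$ are open in $X^{\mbox{\tiny\texttt{cons}}}$, so if $x\in S$ then $S$ is a patch-neighbourhood of $x$ and hence $S\in\ms U$, while if $x\notin S$ then $X\setminus S$ is a patch-neighbourhood of $x$, forcing $X\setminus S\in\ms U$ and so $S\notin\ms U$. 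Thus $x\in S\Leftrightarrow S\in\ms U$ for every $S\in\mathcal S$, i.e. $X(\ms U)\neq\emptyset$.

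The substantial direction is $(ii)\Rightarrow(i)$, and here the engine is the Alexander Subbasis Lemma. To see that $X$ is quasi-compact it suffices to refute a cover by members of $\mathcal S$ with no finite subcover: if $\{S_i\}_{i\in I}\subseteq\mathcal S$ were such a cover, the family $\{X\setminus S_i\}_{i\in I}$ would have the finite intersection property, hence would lie in some ultrafilter $\ms U$, giving $S_i\notin\ms U$ for all $i$; then any $x\in X(\ms U)$ (nonempty by hypothesis) would satisfy $x\notin S_i$ for all $i$, contradicting that the $S_i$ cover $X$. The same finite-intersection-to-ultrafilter device, applied to the family $\mathcal S\cup\{X\setminus S\mid S\in\mathcal S\}$, shows that $X^{\mbox{\tiny\texttt{cons}}}$ is quasi-compact, and since the members of $\mathcal S$ are clopen there it is compact and totally disconnected. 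It then remains to verify the rest of the Hochster axioms: that finite intersections of members of $\mathcal S$ are quasi-compact (again by the Alexander/ultrafilter argument applied to such a set as a subspace) and form a basis, that $X$ is $\mathrm T_0$ (which is exactly the separation of the points of $X$ by $\mathcal S$, equivalently the Hausdorffness of $X^{\mbox{\tiny\texttt{cons}}}$), and that every irreducible closed subspace has a unique generic point. The hard part will be this last axiom, soberness: for an irreducible closed $C$ one must manufacture an ultrafilter whose associated point $X(\ms U)$ is forced to be the (necessarily unique, by $\mathrm T_0$) generic point of $C$.

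For part (2), with $\mathcal S$ as in (ii), I would read the stated condition as the assertion that a subset $Y$ is closed in $X^{\mbox{\tiny\texttt{cons}}}$ precisely when it contains all of its patch-limit points. Indeed, every ultrafilter $\ms V$ on $Y$ induces an ultrafilter on $X$ via $S\mapsto(S\cap Y\in\ms V)$, which by the compactness just established converges in $X^{\mbox{\tiny\texttt{cons}}}$; the same biconditional computation as in the forward direction identifies its set of limit points with $Y(\ms V)$. Hence if $Y$ is patch-closed these limits must lie in $Y$, giving $Y(\ms V)\subseteq Y$; conversely, if $Y$ is not patch-closed, a point $x$ lying in the patch-closure of $Y$ but outside $Y$ is the limit of an ultrafilter $\ms V$ supported on $Y$, so $x\in Y(\ms V)\setminus Y$, contradicting the inclusion. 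The one delicate verification here is the exact coincidence of the limit set of the extended ultrafilter with $Y(\ms V)$, which again relies on each $S\in\mathcal S$ being clopen for the constructible topology.
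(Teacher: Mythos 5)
You should first be aware that the paper does not prove this statement at all: it is imported verbatim from \cite[Corollary 3.3]{Fi} and used as a black box, so the only meaningful comparison is with the argument in that cited source, which indeed runs along the same lines as yours (subbasic sets clopen in the patch topology, ultrafilter convergence in a compact space, the Alexander subbasis lemma). Your handling of (i)$\Rightarrow$(ii), of quasi-compactness in (ii)$\Rightarrow$(i), and of part (2) is correct in outline and in the details you actually supply.

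There are, however, two genuine gaps in the direction (ii)$\Rightarrow$(i). The decisive one is soberness: you write that ``the hard part will be this last axiom'' and that one ``must manufacture an ultrafilter whose associated point is forced to be the generic point of $C$'', but you never manufacture it --- this is the heart of the implication and cannot be left as an announcement. (The missing construction: for $C$ irreducible and closed, the family $\{C\}\cup\{S\cap C \mid S\in\mathcal S,\ S\cap C\neq\emptyset\}$ has the finite intersection property by irreducibility, hence is contained in an ultrafilter $\ms U$; any $x\in X(\ms U)$ then lies in exactly those $S\in\mathcal S$ meeting $C$, from which one checks that $x\in C$ and that $C=\Cl(\{x\})$, uniqueness following from T$_0$.) The second gap is the T$_0$ axiom itself: you assert it is ``exactly the separation of the points of $X$ by $\mathcal S$'', but condition (ii) as printed here does not give that separation --- the two-point indiscrete space with subbasis $\{X\}$ satisfies (ii) verbatim and is not T$_0$. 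The original \cite[Corollary 3.3]{Fi} includes ``$X$ is a T$_0$ space'' as part of condition (ii); with that hypothesis restored your separation claim is exactly right (two points contained in the same members of $\mathcal S$ have the same open neighborhoods), but against the statement as quoted in this paper your T$_0$ step is assumed rather than proved.
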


\begin{cor}\label{osservazione}
Let $X$ be a topological space satisfying the equivalent conditions of Theorem \ref{spectral-ultra}(1), and let $\mathcal S$ be as in Theorem \ref{spectral-ultra}(1,ii). Then $\mathcal S$ is a subbasis of   quasi-compact open  subspaces of $X$. 
\end{cor}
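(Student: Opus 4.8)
The plan is to treat the two requirements separately: since $\mathcal S$ is by hypothesis a subbasis of $X$, every $S\in\mathcal S$ is automatically open, so the whole content of the statement is that each $S\in\mathcal S$ is \emph{quasi-compact}. For this I would invoke the ultrafilter characterization of quasi-compactness: a subspace $K$ of $X$ is quasi-compact if and only if every ultrafilter on $K$ converges to some point of $K$ (in the non-Hausdorff setting the limit need not be unique, but existence of a limit point is exactly what is needed). Thus the goal reduces to producing, for each $S\in\mathcal S$ and each ultrafilter on $S$, a limit point lying in $S$.

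Fix $S\in\mathcal S$ and an ultrafilter $\ms V$ on $S$. First I would transport $\ms V$ to the whole space by setting
$$
\ms U:=\{A\subseteq X\mid A\cap S\in\ms V\}.
$$
A routine check shows that $\ms U$ is an ultrafilter on $X$, and that $S\in\ms U$ (since $S\cap S=S\in\ms V$). Applying the hypothesis, namely condition (ii) of Theorem \ref{spectral-ultra}(1), to the ultrafilter $\ms U$ produces a point $x\in X(\ms U)$, because $X(\ms U)\neq\emptyset$.

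Next I would locate this point inside $S$ and verify convergence. Since $S$ itself is an element of the subbasis $\mathcal S$ and $S\in\ms U$, the defining biconditional of $X(\ms U)$, applied with the subbasic set $S$, forces $x\in S$. To see that $\ms V$ converges to $x$ in the subspace $S$, let $O$ be any open neighborhood of $x$ in $X$; because $\mathcal S$ is a subbasis, there exist $T_1,\dots,T_n\in\mathcal S$ with $x\in T_1\cap\cdots\cap T_n\subseteq O$. For each index $i$ we have $x\in T_i$, so $x\in X(\ms U)$ gives $T_i\in\ms U$; as $\ms U$ is a filter, $T_1\cap\cdots\cap T_n\in\ms U$ and hence $O\in\ms U$. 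Unwinding the definition of $\ms U$, this says precisely $O\cap S\in\ms V$. Thus every neighborhood of $x$ in $S$ belongs to $\ms V$, i.e. $\ms V$ converges to $x\in S$. Since $\ms V$ was an arbitrary ultrafilter on $S$, the subspace $S$ is quasi-compact.

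I do not expect a deep obstacle here: the argument is a clean application of the ultrafilter machinery already set up for Theorem \ref{spectral-ultra}. The only points requiring care are the verification that $\ms U$ is genuinely an ultrafilter on $X$ containing $S$, and the passage from an arbitrary open neighborhood of $x$ to a finite subbasic intersection, since it is exactly this step that lets the defining property of $X(\ms U)$ — which speaks only about membership of \emph{subbasic} sets — deliver the convergence of $\ms V$.
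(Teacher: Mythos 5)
Your argument is correct, but it takes a genuinely different route from the paper's. The paper's proof is essentially a citation: it invokes \cite[Corollary 2.9, Propositions 2.11 and 3.2]{Fi} to conclude that each $S\in\mathcal S$ is clopen in the constructible topology of the spectral space $X$, and then uses the general fact that a clopen subset of the compact space $X^{\mbox{\tiny\texttt{cons}}}$ is quasi-compact, hence quasi-compact in the coarser given topology. You instead give a self-contained, first-principles argument: push the ultrafilter $\ms V$ on $S$ forward to an ultrafilter $\ms U$ on $X$ with $S\in\ms U$, use condition (ii) to produce $x\in X(\ms U)$, read off $x\in S$ from the defining biconditional applied to the subbasic set $S$ itself, and verify convergence of $\ms V$ to $x$ by reducing an arbitrary neighborhood to a finite intersection of subbasic sets. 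All the steps check out, including the ultrafilter-convergence criterion for quasi-compactness (valid without Hausdorff) and the reduction to subbasic sets (the degenerate case $O=X$ is harmless since $X\in\ms U$). What your approach buys is independence from the external results on the constructible topology: in effect you re-derive their content in this special case, since your observation that $X(\ms U)$ coincides with $S(\ms V)$ and lands inside $S$ is exactly the statement, via Theorem \ref{spectral-ultra}(2) with $Y=S$, that $S$ is closed in the constructible topology. What the paper's approach buys is brevity, at the cost of sending the reader to \cite{Fi}.
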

\begin{proof}
By \cite[Corollary 2.9, Propositions 2.11 and 3.2]{Fi}, $\mathcal S$ is a collection of clopen sets with respect to the constructible topology on the spectral space $X$.  In the constructible topology, every clopen set is quasi-compact
 with respect to the given spectral topology.  The claim follows.
\end{proof}

\section{Spectral spaces of ideals and modules}

The main purpose of the present section is to apply  the general construction of the space of inverse-closed subspaces of the prime spectrum of a ring, considered in the previous section,   to obtain a topological version of Hilbert's Nullstellensatz.

\bigskip

Let $R$ be a ring and $M$ be an $R$-module. On the set $\submod(M|R)$ of $R$-submodules of $M$ we can define an {\it hull-kernel topology} having, as a   subbasis for the closed sets, the subsets of the form 
$$
\bV(x_1, x_2, \ldots,x_m):=\{N\in\submod(M|R)\mid x_1, x_2,\ldots,x_m\in N\}\,,
$$
where $x_1, x_2, \ldots,x_m$ varies among all finite subsets of $M$.  Moreover, let 
$$
\bD(x_1, x_2, \z,x_m):=\submod(M|R)\setminus\bV(x_1,, x_2\z,x_m).
$$
Note that the hull-kernel topology is clearly  T$_0$ and, by definition,    the order induced by this topology on $\submod(M|R)$ coincides with the order provided by the set-theoretic inclusion  $\subseteq$. 	

\begin{prop}\label{prop:submod}
For any ring $R$ and for any $R$-module $M$, $\submod(M|R)$ is a spectral space.   
Moreover, the collection of sets $\mathcal S:=\{\bD(x_1,\z,x_n)\mid x_1,\z,x_n\in M \}$ is a subbasis of   quasi-compact open subspaces of $\submod(M|R)$.
\end{prop}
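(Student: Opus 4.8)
The plan is to verify the ultrafilter criterion of Theorem~\ref{spectral-ultra}(1) directly for the space $X:=\submod(M|R)$, taking as subbasis the collection $\mathcal S=\{\bD(x_1,\z,x_n)\mid x_1,\z,x_n\in M\}$ itself: these are exactly the complements of the subbasic closed sets $\bV(x_1,\z,x_n)$, hence a subbasis of open sets for the hull-kernel topology. Once the criterion is checked for this particular $\mathcal S$, both assertions follow at once: $X$ is spectral by Theorem~\ref{spectral-ultra}(1), and $\mathcal S$ is a subbasis of quasi-compact open subspaces by Corollary~\ref{osservazione}.

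So I would fix an arbitrary ultrafilter $\ms U$ on $X$ and produce a point of $X(\ms U)$ by an explicit construction. The natural candidate is
$$
N_{\ms U}:=\{x\in M\mid \bV(x)\in\ms U\},
$$
the set of those $x$ lying in $\ms U$-almost every submodule. First I would check that $N_{\ms U}$ is an $R$-submodule of $M$, which rests only on the filter axioms together with the evident inclusions among the sets $\bV(\cdot)$: since $\bV(0)=X\in\ms U$ we get $0\in N_{\ms U}$; if $x,y\in N_{\ms U}$ then $\bV(x)\cap\bV(y)=\bV(x,y)\in\ms U$ and, because $\bV(x,y)\subseteq\bV(x+y)$, upward closure of $\ms U$ forces $\bV(x+y)\in\ms U$, so $x+y\in N_{\ms U}$; and if $x\in N_{\ms U}$, $r\in R$, then $\bV(x)\subseteq\bV(rx)$ gives $rx\in N_{\ms U}$.

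Next I would verify that $N_{\ms U}\in X(\ms U)$, i.e.\ that $N_{\ms U}\in\bD(x_1,\z,x_n)\iff\bD(x_1,\z,x_n)\in\ms U$ for every tuple. This is where the ultrafilter (and not merely filter) hypothesis enters. On one hand, $N_{\ms U}\in\bD(x_1,\z,x_n)$ means that some $x_i\notin N_{\ms U}$, i.e.\ $\bV(x_i)\notin\ms U$, which by the ultrafilter dichotomy is equivalent to $\bD(x_i)=X\w\bV(x_i)\in\ms U$. On the other hand $\bD(x_1,\z,x_n)=\bigcup_i\bD(x_i)$, and an ultrafilter contains a finite union exactly when it contains one of the terms; hence $\bD(x_1,\z,x_n)\in\ms U$ if and only if some $\bD(x_i)\in\ms U$. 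The two conditions coincide, so $N_{\ms U}\in X(\ms U)$ and in particular $X(\ms U)\neq\emptyset$.

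The argument is short, and the only genuinely delicate point is keeping straight the two distinct uses of the ultrafilter axiom: stability under finite intersection, used to see that $N_{\ms U}$ is closed under addition, and the prime-like behaviour on finite unions and complements, used for the subbasis equivalence. I expect no real obstacle beyond this bookkeeping, since the cited Theorem~\ref{spectral-ultra} and Corollary~\ref{osservazione} do all the topological work of converting the combinatorial fact $X(\ms U)\neq\emptyset$ into spectrality of $X$ together with the quasi-compactness of the members of $\mathcal S$.
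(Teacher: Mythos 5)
Your proposal is correct and follows essentially the same route as the paper: both construct the candidate $N_{\ms U}=\{x\in M\mid \bV(x)\in\ms U\}$, verify it is a submodule from the filter axioms and the inclusions among the sets $\bV(\cdot)$, check that $N_{\ms U}\in X(\ms U)$ (a step the paper only asserts ``follows easily'' and you spell out correctly via the prime property of ultrafilters on the union $\bD(x_1,\z,x_n)=\bigcup_i\bD(x_i)$), and then invoke Theorem~\ref{spectral-ultra}(1) and Corollary~\ref{osservazione}. No gaps.
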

\begin{proof}
Let $\mathscr{U}$ be an ultrafilter on $\submod(M|R)$, and set $N_{\mathscr{U}}:=\{y\in M\mid \bV(y) \in \mathscr{U}\}$.

If $y_1,y_2,y\in N_{\mathscr{U}}$ and $r\in R$, then $\bV(y_1)$, $\bV(y_2)$ and $\bV(y)$ are in $\mathscr{U}$. 
Since $\bV(y_1-y_2)\supseteq \bV(y_1)\cap \bV(y_2)$ and $\bV(xr)\supseteq \bV(y)$, by definition of ultrafilter we have $\bV(y_1-y_2)\in N_{\mathscr{U}}$ and $\bV(ry)\in N_{\mathscr{U}}$, 
i.e., $y_1-y_2,ry\in N_{\mathscr{U}}$. Therefore, $N_{\mathscr{U}}$ is a $R$-submodule of $M$.

 From the definition, it follows   easily  that:
\begin{equation*}
N_{\mathscr{U}}\in\submod(M|R)(\ms U):=\{N\in \submod(M|R)\mid [\forall \Omega \in \mathcal S,N\in \Omega \iff \Omega \in\ms U] \}\,.
\end{equation*}
Hence, by \cite[Corollary 3.3]{Fi}, $\submod(M|R)$ is a spectral space.  
 The last statement follows from Corollary \ref{osservazione}.
\end{proof}

 As  particular cases of the spectral space of the submodules of a given module, we  can consider the following distinguished cases.
 \begin{enumerate}
 \item[(a)] Given any ring $R$,  let 
 $$
 \begin{array}{rl}
 \texttt{Id}({R})   : = & \hskip -7pt \submod(R|R) \,, \\  
 \texttt{Id}_{_\bullet}\!({R}) := & \hskip -7pt  \texttt{Id}({R})\setminus\{R\},
 \end{array}
 $$
where $\texttt{Id}({R})$ (respectively,   $\texttt{Id}_{_\bullet}\!({R})$) is  the set of all ideals   (respectively,  the set of  all proper ideals).
 \item[(b)] Given any integral domain $D$ with quotient field $K$, let
$$
\begin{array}{rl}
 \FFO(D)   := \submod(K|D) =& \hskip -7pt   \{E  \mid E \mbox{ is a $D$-submodule of } K \}\,. 
\end{array}
$$
\end{enumerate}

\begin{cor}\label{prop:submod-id}
Let $R$ be a ring and let $D$ be an integral domain with quotient field $K$, $D \neq K$.
\begin{enumerate}[\rm(1)]
\item\label{prop:submod-id:id} 
The set $ \mbox{\rm \texttt{Id}}({R})$ (respectively,   $\mbox{\rm \texttt{Id}}_{_\bullet}\!({R})$),
endowed with the hull-kernel topology, is a spectral space.

\item  Let $\mbox{\rm \texttt{Rd}}({R})$ be the set of proper radical ideals of $R$ and consider the follo\-wing topological embeddings with respect to the hull-kernel topology, induced from $ \mbox{\rm \texttt{Id}}({R})$,
 $$
  \spec({R})\subseteq \mbox{\rm \texttt{Rd}}(R) \subseteq \mbox{\rm \texttt{Id}}_{_\bullet}\!({R}) 
  \subseteq \mbox{\rm \texttt{Id}}({R})\,.
  $$ 
 Then,  the hull-kernel topology induced  on $\spec({R})$ coincides with the Zariski topology. 

\item The space $\FFO(D)$ 
endowed with the hull-kernel topo\-logy, is a spectral space.

\item The space $\FO(D)$ of all fractional ideals of $D$, endowed with the hull-kernel topology, is \emph{not} a spectral space.

\end{enumerate}
\end{cor}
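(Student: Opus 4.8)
The plan is to treat the four assertions in increasing order of difficulty, deducing the first three almost directly from Proposition~\ref{prop:submod} and isolating the only genuinely new content in part~(4).

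For~(1), I would first observe that $\texttt{Id}(R)=\submod(R|R)$ by definition, so its spectrality is exactly Proposition~\ref{prop:submod} applied to $M=R$. For $\texttt{Id}_{\bullet}(R)$ the key remark is that an ideal $N$ contains $1$ if and only if $N=R$; hence $\bV(1)=\{R\}$ and consequently $\texttt{Id}_{\bullet}(R)=\texttt{Id}(R)\setminus\{R\}=\bD(1)$. Since $\bD(1)$ belongs to the subbasis $\mathcal S$ exhibited in Proposition~\ref{prop:submod}, it is a quasi-compact open subspace of the spectral space $\texttt{Id}(R)$, and (as recalled in Section~1) every quasi-compact open subspace of a spectral space is again spectral, being clopen in the constructible topology.

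For~(2), all the displayed sets are to be equipped with the topology induced from $\texttt{Id}(R)$, so the inclusions are tautologically topological embeddings once one checks them as inclusions of sets (a prime ideal is a proper radical ideal, a proper radical ideal is a proper ideal, and so on); the only substantial point is the identification on $\spec(R)$. Restricting the subbasic open set $\bD(x)=\{N\mid x\notin N\}$ to primes gives $\bD(x)\cap\spec(R)=\{P\in\spec(R)\mid x\notin P\}$, which is precisely the basic Zariski-open set determined by $x$. As $x$ ranges over $R$ these sets form a subbasis both for the induced hull-kernel topology and for the Zariski topology, so the two coincide. For~(3) I would simply invoke Proposition~\ref{prop:submod} with $M=K$: by definition $\FFO(D)=\submod(K|D)$, hence it is spectral.

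The heart of the statement, and the main obstacle, is~(4): proving that $\FO(D)$ is \emph{not} spectral. The strategy is to violate quasi-compactness, which is a necessary condition for spectrality, by means of an explicit cover consisting of subspace-open sets $\bD(x)\cap\FO(D)$ with $x\in K$. I would take the entire family $\{\bD(x)\cap\FO(D)\mid x\in K\}$. It is a cover, because any fractional ideal $E$ satisfies $dE\subseteq D$ for some nonzero $d\in D$, so $E\neq K$ (otherwise $dK=K\subseteq D$, contradicting $D\neq K$), and therefore some $x\in K$ lies outside $E$. On the other hand no finite subfamily covers: a finite set $x_1,\dots,x_m\in K$ generates a finitely generated $D$-submodule $E_0:=Dx_1+\cdots+Dx_m$ of $K$, which admits a common denominator and is hence a genuine fractional ideal, so $E_0\in\FO(D)$; since $x_1,\dots,x_m\in E_0$, this point is missed by $\bD(x_1)\cup\cdots\cup\bD(x_m)$. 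Thus $\FO(D)$ is not quasi-compact and cannot be spectral. The crucial and robust point is the asymmetry between finitely generated $D$-submodules of $K$, which are automatically fractional, and $K$ itself, which is not; this is what makes the same cover work uniformly for every domain with $D\neq K$, and it is exactly where that hypothesis enters.
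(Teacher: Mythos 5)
Your proof is correct, and for the most part it mirrors the paper's: parts (1) and (3) are, in both treatments, direct applications of Proposition \ref{prop:submod}, and part (2) is the same routine identification of $\bD(x)\cap\spec(R)$ with the basic Zariski-open set. The two places where you diverge are worth noting. For $\texttt{Id}_{_\bullet}\!(R)$ the paper re-runs the ultrafilter criterion, checking that the limit ideal $N_{\mathscr U}$ of an ultrafilter on $\texttt{Id}_{_\bullet}\!(R)$ is proper (because $\bD(1)\cap\texttt{Id}_{_\bullet}\!(R)=\emptyset$ cannot lie in $\mathscr U$); you instead observe that $\texttt{Id}_{_\bullet}\!(R)=\bD(1)$ is itself a quasi-compact open subspace of the spectral space $\texttt{Id}(R)$ and invoke the general fact, recalled in the preliminaries, that such subspaces are spectral. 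Your route is shorter and reuses the ``last statement'' of Proposition \ref{prop:submod} (via Corollary \ref{osservazione}); the paper's route has the advantage of being the template that is reused verbatim later (e.g., in Corollary \ref{prop:closure-proper}). For part (4) the arguments are genuinely different: the paper argues that a spectral space must possess maximal points in the specialization order (closed points), whereas every fractional ideal $E$ satisfies $E\subsetneq E+xD$ for any $x\in K\setminus E$ (note the paper's ``$x\in E\setminus K$'' is a typo for $x\in K\setminus E$), so no maximal element exists; you instead exhibit the explicit open cover $\{\bD(x)\cap\FO(D)\mid x\in K\}$ with no finite subcover, using that $K$ itself is not fractional while every finitely generated $D$-submodule of $K$ is. Both refute spectrality through quasi-compactness; yours does so without appealing to the existence of closed points (which hides a Zorn-type argument), and it is structurally the same device the paper itself uses in Proposition \ref{prop:submodbullet} and Remark \ref{notspectral}, so it fits the surrounding text well.
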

\begin{proof}
(1) and (3). The statements for $ \mbox{\rm \texttt{Id}}({R})$ for $\FFO(D)$ are direct consequences of Proposition \ref{prop:submod}.
The  claim for $\texttt{Id}_{_\bullet}\!({R})$
  follows if we show that 
    $N_{\mathscr{U}}\neq R$,
    when $\mathscr{U}$ is an ultrafilter of 
   $\texttt{Id}_{_\bullet}\!({R})$. 
 If $N_{\mathscr{U}}=R$ then $1\in N_{\mathscr{U}}$,  i.e., $\bD(1)\cap \texttt{Id}_{_\bullet}\!({R}) \in\mathscr{U}$. 
Since $\bD(1)\cap \texttt{Id}_{_\bullet}\!({R}) =\emptyset$, we reach a contradiction. Hence, $N_{\mathscr{U}}\neq R$.
 
 (2) is straightforward.

(4)
If $\FO(D)$ were a spectral space, then it would have proper maximal elements.  If $E$ is one of these, then there is  an element $x \in E\setminus K$ (since $K$ is not a fractional ideal of $D$ if $D\neq K$) and so $E+xD$ is a fractional ideal properly containing $E$, against the hypothesized maximality.
\end{proof}
\medskip

\begin{oss}  Since we have proved that $\texttt{Id}_{_\bullet}\!({R})$ is a spectral space  (Corollary \ref{prop:submod-id}(1)), it is then natural to ask in general if similar cases might occur: 
\begin{enumerate}
\item[{\bf (Q.1)}]  Is $\submod^{^\bullet}\!(M|R) := \submod(M|R) \setminus \{(0)\}$ (with the hull-kernel topology) a spectral space?

 \item[{\bf (Q.2)}] Is $\submod_{_\bullet}\!(M|R) := \submod(M|R) \setminus \{M\}$ (with the hull-kernel topology) a spectral space? 
\end{enumerate}

The answer to both question is negative: we shall see in Remark \ref{notspectral} a counterexample to question {\bf (Q.1)}, while the problem of question {\bf (Q.2)} will be completely settled in the   following  Proposition \ref{prop:submodbullet}.
\end{oss}

\begin{prop}\label{prop:submodbullet}
Let $M$ be a $R$-module. Then, $\submod_{_\bullet}\!(M|R) := \submod(M|R) \setminus \{M\}$ is a spectral space, endowed with the hull-kernel subspace topology, if and only if $M$ is finitely generated.
\end{prop}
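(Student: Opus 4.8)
The plan is to prove both implications through a single bridge: the equivalence between finite generation of $M$ and quasi-compactness of $\submod_{_\bullet}(M|R)$. The crucial observation, which I would record first, is a dictionary between module generation and the subbasic closed sets. For a finite list $x_1,\dots,x_n\in M$ one has
$$
\bV(x_1,\dots,x_n)=\{N\in\submod(M|R)\mid x_1,\dots,x_n\in N\}=\{M\}
\quad\Longleftrightarrow\quad M=\langle x_1,\dots,x_n\rangle,
$$
since the submodule generated by $x_1,\dots,x_n$ is the smallest member of $\bV(x_1,\dots,x_n)$, so this set collapses to the singleton $\{M\}$ exactly when that generated submodule is all of $M$. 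Passing to complements, this says $\submod_{_\bullet}(M|R)=\bD(x_1,\dots,x_n)$ if and only if $x_1,\dots,x_n$ generate $M$.

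For the ``if'' direction, suppose $M=\langle m_1,\dots,m_k\rangle$ is finitely generated. By the observation, $\submod_{_\bullet}(M|R)=\bD(m_1,\dots,m_k)$. By Proposition \ref{prop:submod} together with Corollary \ref{osservazione}, the sets $\bD(x_1,\dots,x_n)$ form a subbasis of \emph{quasi-compact} open subspaces of the spectral space $\submod(M|R)$, so $\bD(m_1,\dots,m_k)$ is a quasi-compact open subspace. As recalled in the discussion of the constructible topology, every quasi-compact open subspace of a spectral space is itself spectral, being clopen in the constructible topology and hence a closed, and thus spectral, subspace. Therefore $\submod_{_\bullet}(M|R)$ is spectral.

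For the ``only if'' direction I would argue by contraposition: assuming $M$ is not finitely generated, I exhibit an open cover of $\submod_{_\bullet}(M|R)$ with no finite subcover, so the space is not quasi-compact and hence not spectral. Consider the family $\{\bD(x)\mid x\in M\}$. Each $\bD(x)$ omits $M$ (since $x\in M$), hence is contained in $\submod_{_\bullet}(M|R)$; moreover every proper submodule $N$ lies in $\bD(x)$ for any $x\in M\setminus N$, so this family covers $\submod_{_\bullet}(M|R)$. If some finite subfamily $\bD(x_1),\dots,\bD(x_n)$ covered $\submod_{_\bullet}(M|R)$, then taking complements in $\submod(M|R)$ would force $\bV(x_1,\dots,x_n)=\{M\}$, whence $M=\langle x_1,\dots,x_n\rangle$ by the observation, contradicting that $M$ is not finitely generated. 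Thus no finite subcover exists.

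The argument is essentially complete once the generation criterion is in place; the only points requiring care are verifying that the family $\{\bD(x)\}$ genuinely covers $\submod_{_\bullet}(M|R)$ and sits inside it, and invoking correctly that a quasi-compact open subspace inherits spectrality from being clopen in the constructible topology. I do not anticipate a serious obstacle, since the whole proof reduces to a direct manipulation of the subbasis $\mathcal S$ and the membership dictionary between finite generation and the relations $x_i\in N$.
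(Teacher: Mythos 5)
Your proof is correct, and the backward (``only if'') direction is essentially identical to the paper's: the same cover $\{\bD(x)\mid x\in M\}$ with the same finite-subcover obstruction. The forward direction, however, takes a genuinely different and arguably slicker route. The paper reruns the ultrafilter machinery of Theorem~\ref{spectral-ultra}: given an ultrafilter $\ms U$ on $X=\submod_{_\bullet}\!(M|R)$, it shows the limit submodule $N_{\ms U}$ is proper (because $\bV(F)\cap X=\emptyset$ for a finite generating set $F$, so it cannot lie in $\ms U$), and concludes $X(\ms U)\neq\emptyset$. You instead observe that $\bV(x_1,\dots,x_n)=\{M\}$ precisely when $x_1,\dots,x_n$ generate $M$, so that for finitely generated $M$ the space $\submod_{_\bullet}\!(M|R)$ is literally one of the subbasic quasi-compact open sets $\bD(m_1,\dots,m_k)$ of the spectral space $\submod(M|R)$ (Proposition~\ref{prop:submod} and Corollary~\ref{osservazione}), and then invoke the fact, recorded in the preliminaries, that a quasi-compact open subspace of a spectral space is spectral, being clopen in the constructible topology. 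Your argument buys a purely structural proof that avoids a second pass through ultrafilters, at the cost of leaning on the already-established quasi-compactness of the subbasis and on Hochster's result that constructibly closed subsets are spectral; the paper's version is self-contained modulo Theorem~\ref{spectral-ultra} and reuses the $N_{\ms U}$ computation it has already set up. Both are complete; your generation dictionary $\bV(x_1,\dots,x_n)=\{M\}\Leftrightarrow M=\langle x_1,\dots,x_n\rangle$ also makes the failure of quasi-compactness in the converse direction transparent.
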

\begin{proof}
Consider the subbasis of open sets $\mathcal S:=\{\bD(x_1,\z,x_n)\mid x_1,\z,x_n\in M \}$ of $X:=\submod_{_\bullet}\!(M|R)$ and assume first that $M$ is finitely generated. If $\ms U$ is an ultrafilter on $X$, recall that the subset $N_{\ms U}:=\{y\in M\mid \bV(y)\cap X\in\ms U \}$ is a $R$-submodule of $M$, by the proof of Proposition \ref{prop:submod}. In the notation of Theorem \ref{spectral-ultra}, if we show that $N_{\ms U}$ is a proper submodule of $M$, it will follow immediately that $N_{\ms U}\in X(\ms U)$, thus  $X$ will be spectral. Let $F$ be a finite set of generators for $M$. If $N_{\ms U}=M$ then, by definition, $\bV(F)\cap X\in\ms U$ and,   since the empty set is not a member of any ultrafilter, we can pick a submodule $N\in \bV(F)\cap X$. But $N\in \bV(F)$ implies $M=\langle F\rangle=N$, a contradiction. Then $N_{\ms U}\neq M$ and thus the first part of the proof is complete. 
	
Conversely, assume that $M$ is not finitely generated, and note that the family of subsets  $\{\bD(x)\mid x\in M \}$ is obviously an open cover of $X$. Of course, for any finite subset $F$ of $M$, the collection of open sets $\{\bD(x)\mid x\in F\}$ is not a subcover of $X$, since the finitely generated submodule $N:=\langle F \rangle$  of $M$ is proper, by assumption, and thus $N\in X\setminus \bigcup\{\bD(x)\mid x\in F \}$. This shows that, if $M$ is not finitely generated, then $X$ is not quasi-compact and, a fortiori, is not spectral. 
\end{proof}

\begin{oss}  
In Corollary \ref{prop:submod-id}, we considered the space of ideals of a ring $R$ as a special case  of the space of $R$-submodules of a $R$-module $M$. 
It is possible, however, to reverse this relation, in the following way.

With the same proof of Proposition \ref{prop:submod}, we can first show that, given two ideals $I$ and $J$ with $J \subseteq I$, the set $\texttt{Id}((I,J)|R):=\{H\in\texttt{Id}(R) \mid J\subseteq H\subseteq I\}$ is a spectral space, with $\texttt{Id}(R)$ being the special case with $J=(0)$ and $I=R$. Consider now an $R$-module $M$: then, $M$ is an ideal of the idealization ring $\mathscr{R} :=R \ltimes M$  \cite[Section 25]{hu}. In this case, we have that  $\texttt{Id}((M, (0))|\mathscr{R})$  coincides with $\submod(M)$ and so, from this fact, we can deduce that $\submod(M)$ is a spectral space.
\end{oss}

\medskip

 In the next proposition we show that the construction of the spectral space $\submod(M|R)$ is functorial.
 Recall that a map $f:X\rightarrow Y$ of spectral spaces is called a \emph{spectral map} provided that, for any open and quasi-compact subspace $\Omega$ of $Y$, 
   the set $f^{-1}(\Omega)$ is open and quasi-compact. In particular, any spectral map of spectral spaces is continuous.

\begin{prop}
Let $R$ be a ring. For every $R$-module homomorphism $f:M\rightarrow N$, set $\submod(f): \submod(N|R) \rightarrow  \submod(M|R)$, defined by  $\submod(f)(L):=   f^{-1}(L)$, for each $L \in \submod(N|R) $. 
The assignment $M\mapsto\submod(M|R)$, $f\mapsto\submod(f)$ gives rise to a contravariant functor $\submod$ from the category of $R$-modules and $R$-linear maps to the category of spectral spaces and spectral maps.
\end{prop}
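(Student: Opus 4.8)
The plan is to verify, in turn, that $\submod(f)$ actually takes values in $\submod(M|R)$, that it is a spectral map, and that the assignment is functorial and contravariant; only the middle point carries any real content.

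First I would dispose of the routine points. That $f^{-1}(L)$ is an $R$-submodule of $M$ whenever $L$ is an $R$-submodule of $N$ is immediate from the $R$-linearity of $f$ (closure under subtraction and under multiplication by $R$), so $\submod(f)$ is well defined. For functoriality I would simply invoke the elementary behaviour of preimages: $\mathrm{id}_M^{-1}(L)=L$ gives $\submod(\mathrm{id}_M)=\mathrm{id}_{\submod(M|R)}$, while $(g\circ f)^{-1}(L)=f^{-1}(g^{-1}(L))$ gives $\submod(g\circ f)=\submod(f)\circ\submod(g)$ for composable $f\colon M\to N$ and $g\colon N\to P$, which is exactly the composition rule required of a contravariant functor.

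The substantive step is spectrality, and the whole argument rests on a single computation on subbasic open sets. For $x\in M$ and $L\in\submod(N|R)$ one has $x\in f^{-1}(L)\iff f(x)\in L$, whence
$$
\submod(f)^{-1}(\bD(x))=\{L\in\submod(N|R)\mid f(x)\notin L\}=\bD(f(x)).
$$
By Proposition \ref{prop:submod} and Corollary \ref{osservazione}, each $\bD(y)$ is a quasi-compact open subspace of $\submod(N|R)$, so the preimages of the subbasic opens $\bD(x)$ are quasi-compact open; in particular $\submod(f)$ is continuous.

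Finally I would promote this to all quasi-compact opens via the standard reduction to a subbasis. A basic open $\bD(x_1)\cap\cdots\cap\bD(x_k)$ pulls back to $\bD(f(x_1))\cap\cdots\cap\bD(f(x_k))$, which is quasi-compact open because the quasi-compact opens of the spectral space $\submod(N|R)$ are closed under finite intersection; and an arbitrary quasi-compact open $\Omega$ of $\submod(M|R)$, being a finite union of such basic opens by quasi-compactness, pulls back to a finite union of quasi-compact opens, hence again to a quasi-compact open. This shows $\submod(f)$ is spectral. I do not expect a genuine obstacle here: the proof is driven entirely by the clean identity $\submod(f)^{-1}(\bD(x))=\bD(f(x))$, together with the general principle that a map of spectral spaces is spectral as soon as the preimages of a subbasis of quasi-compact opens are quasi-compact open.
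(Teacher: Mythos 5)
Your proof is correct and follows essentially the same route as the paper: the paper's argument rests on the identity $\submod(f)^{-1}(\bV(x_1,\ldots,x_m))=\bV(f(x_1),\ldots,f(x_m))$, which is just the closed-set complement of your computation $\submod(f)^{-1}(\bD(x))=\bD(f(x))$. The only difference is that you spell out the routine reduction from subbasic quasi-compact opens to arbitrary ones, which the paper leaves implicit.
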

\begin{proof} By Proposition \ref{prop:submod},   $\submod(M|R)$ and $\submod(N|R)$ are spectral spaces.
In order to show that $\submod(f)$ is continuous and spectral, it is enough to note that, for each finite subset $\{x_1, x_2,\ldots,x_m\}$ of $K$,
 $$
\submod(f)^{-1}(\bV(x_1, x_2,\ldots,x_m))=\bV(f(x_1), f(x_2), \ldots,f(x_m))\,.
$$
Moreover, it is clear that $\submod(g\circ f)=\submod(f)\!\circ\submod(g)$, so that $\submod$ is a (contravariant) functor.
\end{proof}

For example, let $D$ be an integral domain with quotient field $K$ and let $j:D \hookrightarrow K$ be the natural embedding. Then, the map
$\submod(j): \submod(K|D)=\FFO(D)  \rightarrow \submod(D|D) =\mbox{\rm \texttt{Id}}(D)$, defined by $E \mapsto E \cap D$,  is a  spectral retraction (between spectral spaces endowed with the hull-kernel topology).
 In fact, if $i: \mbox{\rm \texttt{Id}}(D)  \hookrightarrow \FFO(D)$ is the natural (spectral) embedding, then $\submod(j) \circ i$ is the identity of $\mbox{\rm \texttt{Id}}(D)$.




\section{The prime spectrum of a module}

Recall that a \emph{prime submodule} of {\bc a} $R$-module $M$ is a submodule $P\neq M$ such that, whenever $am\in P$ for some $a\in R$, $m\in M$, we have $m\in P$ or $aM\subseteq P$  (see, for example, \cite{lu}). Denote by $\primesubmod_R(M)$ the set of prime submodules of $M$. Note that $\primesubmod_R(M)$ may be empty (e.g., if $R$ is a domain, $K$ its quotient field and $M=K/R$) and that when $M=R$   it coincides with the prime spectrum of $R$.

\begin{prop}
	Let $M$ be a $R$-module and endow $\submod(M|R)$ with the hull-kernel topology.  
	\begin{enumerate}[\rm (1)]
		\item $\primesubmod_R(M)\cup\{M\}$ is a spectral subspace of $\submod(M|R)$.
		\item $\primesubmod_R(M)$ is a spectral space if and only if it is quasi-compact.
		\item If $M$ is finitely generated, then $\primesubmod_R(M)$ is a spectral space.
		\end{enumerate}
		\end{prop}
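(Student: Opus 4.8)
The plan is to prove the three parts using the ultrafilter criterion of Theorem~\ref{spectral-ultra} together with the results already established for $\submod(M|R)$. Throughout I work with the subbasis $\mathcal S:=\{\bD(x_1,\z,x_n)\mid x_1,\z,x_n\in M\}$, which Proposition~\ref{prop:submod} tells us is a subbasis of quasi-compact open subspaces of $\submod(M|R)$, and for an ultrafilter $\ms U$ on a subset $Y\subseteq\submod(M|R)$ I consider the submodule $N_{\ms U}:=\{y\in M\mid \bV(y)\cap Y\in\ms U\}$, which by the proof of Proposition~\ref{prop:submod} is always an $R$-submodule of $M$.

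\emph{Part (1).} I would set $X:=\primesubmod_R(M)\cup\{M\}$. Since $X$ is a subset of the spectral space $\submod(M|R)$, by Theorem~\ref{spectral-ultra}(2) it suffices to show that $X$ is closed in the constructible topology, i.e.\ that $N_{\ms U}\in X$ for every ultrafilter $\ms U$ on $X$. So let $\ms U$ be an ultrafilter on $X$. If $N_{\ms U}=M$ there is nothing to prove, so assume $N_{\ms U}\neq M$; the task is to verify that $N_{\ms U}$ is a prime submodule. Take $a\in R$ and $m\in M$ with $am\in N_{\ms U}$ and suppose $m\notin N_{\ms U}$; I must deduce $aM\subseteq N_{\ms U}$, that is, $ax\in N_{\ms U}$ for every $x\in M$. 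The idea is to translate each of these membership conditions into a subbasic set and use that $X$ consists of prime submodules. Concretely, $am\in N_{\ms U}$ means $\bV(am)\cap X\in\ms U$ and $m\notin N_{\ms U}$ means $\bD(m)\cap X\in\ms U$; intersecting, the set $A:=\bV(am)\cap\bD(m)\cap X$ lies in $\ms U$ and hence is nonempty. For any $N\in A$ we have $am\in N$, $m\notin N$, and $N$ prime, so $aN\subseteq\ldots$ wait, more precisely $aM\subseteq N$; in particular $ax\in N$ for each fixed $x\in M$, whence $A\subseteq\bV(ax)$. Since $A\in\ms U$ and $A\subseteq\bV(ax)\cap X$, we get $\bV(ax)\cap X\in\ms U$, i.e.\ $ax\in N_{\ms U}$. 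As $x$ was arbitrary this gives $aM\subseteq N_{\ms U}$, so $N_{\ms U}$ is prime and part (1) follows.

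\emph{Parts (2) and (3).} For (2), a spectral space is in particular quasi-compact, so one implication is immediate. For the converse, assume $\primesubmod_R(M)$ is quasi-compact. By part (1) the set $X=\primesubmod_R(M)\cup\{M\}$ is a spectral space, and $M$ is its generic point (it contains every other point, being the whole module, so it lies in the closure of no other point but every point lies in its closure---equivalently $M$ is the top element for the inclusion order). The plan is to show that removing the closed point $M$ leaves a spectral space precisely because what remains is quasi-compact: one checks that $M$ is an isolated point of $X^{\mbox{\tiny\texttt{cons}}}$, or directly that $\primesubmod_R(M)$ is open in $X$ while being closed under the relevant passage to $N_{\ms U}$, so that quasi-compactness is the only condition that can fail. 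I would verify the four Hochster axioms for $\primesubmod_R(M)$: it inherits T$_0$, the sober/unique-generic-point property, and a basis of quasi-compact opens closed under finite intersection from the spectral space $X$, the only delicate point being that quasi-compactness of the subbasic opens is not automatic once $M$ is deleted; assuming global quasi-compactness supplies exactly the missing ingredient. Finally (3) is a corollary: if $M$ is finitely generated with generating set $F$, then $\primesubmod_R(M)=\bV(F)^c\cap(\primesubmod_R(M)\cup\{M\})=\bD(F)\cap X$ is an intersection of a quasi-compact open of $X$ with $X$, in fact the complement of the single point $\{M\}=\bV(F)\cap X$ (as in the proof of Proposition~\ref{prop:submodbullet}, since $N\supseteq F$ forces $N=M$), hence quasi-compact; by part (2) it is spectral.

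The main obstacle is the converse direction of part (2): verifying that quasi-compactness genuinely suffices to recover the full spectral property of $\primesubmod_R(M)$ after deleting $M$ from $X$. One must argue carefully that the potential failure point for a subspace of a spectral space---namely quasi-compactness of the subbasic open sets and of the space itself---is the \emph{only} axiom at risk, since soberness and the T$_0$ property pass to arbitrary subspaces only when the subspace is suitably closed under specialization or is closed in the constructible topology. I expect the cleanest route is to observe that $\primesubmod_R(M)=X\setminus\{M\}$ is open in $X$ (its complement $\{M\}=\bV(F)$ for any generating configuration is closed, or more robustly one shows $\{M\}$ is closed since $M$ is the top element under inclusion), so an open quasi-compact subspace of a spectral space is itself spectral, which is exactly the content needed and matches the clopen-in-constructible discussion preceding Theorem~\ref{spectral-ultra}.
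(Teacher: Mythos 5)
Your proof is correct and follows essentially the same route as the paper: part (1) is the same ultrafilter computation showing that $N_{\ms U}$ is a prime submodule whenever it is proper, and part (2) rests on the same observation that $\primesubmod_R(M)$ is an open subspace of the spectral space $X:=\primesubmod_R(M)\cup\{M\}$, so quasi-compactness is all that is needed. Two small repairs. First, in part (2) you call $M$ the \emph{generic} point of $X$ and your parenthetical justification is reversed: since the hull-kernel closure of $\{N\}$ is the set of submodules containing $N$, the point $M$ lies in the closure of \emph{every} point while its own closure is $\{M\}$, so it is the unique \emph{closed} point of $X$; your operative argument (that $\{M\}$ is closed, hence its complement is open) uses exactly this, so nothing breaks, but the sentence as written is wrong. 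Second, in part (3) the paper runs the ultrafilter criterion once more to show that $\primesubmod_R(M)$ is closed in $X^{\mbox{\tiny\texttt{cons}}}$ and hence quasi-compact, whereas you identify $\primesubmod_R(M)=\bD(F)\cap X$ directly; this shortcut is fine, but neither ``complement of a single closed point'' nor ``trace of a quasi-compact open on a subspace'' yields quasi-compactness in general. You should add that $X$ is closed in $\submod(M|R)^{\mbox{\tiny\texttt{cons}}}$ (which is what part (1) actually establishes) and $\bD(F)$ is clopen there, so $\bD(F)\cap X$ is compact in the constructible topology and a fortiori quasi-compact in the coarser hull-kernel topology; with that one line inserted, your version of (3) is a legitimate and slightly more direct alternative to the paper's.
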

		\begin{proof}
			(1)  Let $\mathscr{U}$ be an ultrafilter on $\primesubmod_R(M)$; like in the proof of Proposition \ref{prop:submod}, it is enough to show that the set $N_{\mathscr{U}}:=\{x\in M\mid \bV(x)\cap\primesubmod_R(M)\in\mathscr{U}\}$ is a prime submodule of $M$, if it is different from $M$. 
			To shorten the notation,  set $\SSS :=\primesubmod_R(M)\cup\{M\}$, $\SSN := \primesubmod_R(M)$, $\bV_{\!\SSN}(x):=\bV(x)\cap\primesubmod_R(M)$ and $\bD_{\SSN}(x):=\primesubmod_R(M)\setminus \bV_{\!\SSN}(x)$.
			
			The proof of Proposition \ref{prop:submod} shows that $N_{\mathscr{U}}$ is a submodule of $M$.  
			Suppose now that   $a\in R$, $m\in M$, $am\in N_{\mathscr{U}}$, and  that $m\notin N_{\ms U}$, so $N_{\ms U}\neq M$. 
			By definition of a prime submodule, it follows easily that $T:=\bV_{\!\SSN}(am) \cap   \bD_{\SSN}(m) \subseteq 
			\bV_{\!\SSN}(ax)$, for any $x\in M$. 
			Now, keeping in mind that $m\notin N_{\ms U}, am\in N_{\ms U}$ and that $\ms U$ is an ultrafilter on $\primesubmod_R(M)$, it follows that $T\in\ms U$ and, a fortiori, $\bV_{\!\SSN}(ax)\in\ms U$, for any $x\in M$, that is, $xM\subseteq N_{\ms U}$.   In other words, $N_{\ms U}$ is a prime submodule of $M$. 
			
			(2) If $ \SSN  =  \primesubmod_R(M)$ is a spectral space then it is clearly quasi-compact. Conversely, keeping in mind that $\{M\}$ is the unique closed point in  $\SSS$, we have that  $\SSN$ is open and quasi-compact in the spectral space $\SSS$, and  hence it is spectral.
			
			(3) Let $\ms U$ and $N_{\ms U}$ be as in part (1).   We need to prove that,  if $M$ is finitely generated, then $N_{\ms U}\neq M$.  In  fact, let $M=\langle x_1, x_2, \z,x_n\rangle$, if $N_{\ms U}=M$, then, by definition, the set $\bigcap_{i=1}^n\bV(x_i)\in\ms U$.
			 Thus,  we can pick a prime submodule $P\in \bigcap_{i=1}^n\bV(x_i)$, that is,   $M=\langle x_1,\ldots,x_n\rangle\subseteq P$,   reaching a contradiction. This proves that, if $M$ is finitely generated, then $\primesubmod_R(M)$ is a closed set of $\SSS$, with respect to the constructible topology,  by Theorem  \ref{spectral-ultra}(2). In particular, $\primesubmod_R(M)$ is quasi-compact, when endowed with the hull-kernel topology. The conclusion is then a consequence of part (2).
			\end{proof}

			\begin{oss}\label{rem:primesub}
				\begin{enumerate}
					\item The condition that $M$ is finitely generated is not necessary for $\primesubmod_R(M)$ to be spectral.
					For example, if  $R=D$ is an integral domain and $M=K$  is its quotient field, then   $\primesubmod_D(K)=\{(0)\}$,  which is  compact and spectral. However, $K$ is not finitely generated over   $D$ if $D\neq K$. 
					
					\item $\primesubmod_R(M)$ can indeed be   non quasi-compact: let $R$ be any ring, $P\in\spec(R)$, and let $M=\bigoplus_{\alpha\in\mathcal{A}}e_\alpha R$  be  a non-finitely generated free module over $R$. We always have $\primesubmod_R(M)\subseteq\bigcup_{\alpha\in\mathcal A}\bD(e_\alpha)$.
					 If $\primesubmod_R(M)$ were quasi-compact, there would be   $\alpha_1, \alpha_2\ldots, \alpha_n\in \mathcal{A}$  such that $\primesubmod_R(M)\subseteq\bD(e_{\alpha_1})\cup \bD(e_{\alpha_2})\cup\cdots\cup\bD(e_{\alpha_n})$, and so there would be no prime submodule containing all $e_{\alpha_1}, e_{\alpha_2},\ldots,e_{\alpha_n}$. 
					 Since $\mathcal{A}$ is infinite, there is an element  $\beta\in\mathcal{A}$ such that $\beta\neq\alpha_i$ for every $i$,  $1 \leq i \leq n$.  
					 Define   a  submodule $N$ of $M$ as follows:
					 $$
					 N:=\bigoplus_{\alpha\in\mathcal{A}}e_\alpha N_\alpha, \mbox{ where \ $N_\alpha=R$ \,  if $\alpha\neq\beta$ \,\,  and \,\, $N_\beta=P$}.
					 $$
					  We have $M/N\simeq R/P$, so that $
					N$ is a   prime submodule of $M$.
					However, $N$ contains $e_{\alpha_1}, e_{\alpha_2},\z,e_{\alpha_n}$, against our hypothesis. Therefore, $\primesubmod_R(M)$ is not quasi-compact.		
					\item In \cite{lu}, the set $\primesubmod_R(M)$ (indicated with $\spec(M)$) was endowed with a topology $\tau$ (which the author calls \emph{Zariski topology}) whose closed sets are those in the form $V(N):=\{P\in \primesubmod_R(M) \mid (P:M)\subseteq(N:M)\}$, as $N$ ranges among the submodules of $M$. This topology is in general weaker than the topology  introduced  in the present paper, and it is T$_0$ if and only if the map  $\psi: \spec_R(M) \rightarrow \spec({R})$, defined by $P\mapsto(P:M)$,  is injective.
					 In \cite{lu},   it was also  shown that, if $\psi$ is injective and its image is the closed subspace $\texttt{V}(\ann(M))$,
					  then it is an homeomorphism on its image (so that, in particular, $\primesubmod_R(M)$ endowed with the topology $\tau$ is spectral). Even when $\tau$ is T$_0$, however,   this topology  does not always coincide with the hull-kernel topology.
					  Indeed, let $R:=\insZ$, $\insZ_2:=\insZ/2\insZ$ and let $M:=\insZ_2\oplus\insQ$. We have $ \primesubmod_R(M) =\{P,Q\}$, where $P:=\insZ_2\oplus(0)$ and $Q:=(0)\oplus\insQ$; 
					  hence both $P$ and $Q$ are closed points in the hull-kernel topology   of $\primesubmod_R(M)$. 
					  On the other hand, both $V(P)$ and $V(Q)$ are irreducible closed subsets in the topology $\tau$ \cite[Corollary 5.3]{lu}. However, $(P:M)=2\insZ$ and $(Q:M)=(0)$, so $V(P)=\{P\}$ and $V(Q)=\{P,Q\}$. It follows that $\primesubmod_R(M)$ is T$_0$  in the Zariski topology,  but $Q$ is not a closed point.
					
					\end{enumerate}
					\end{oss}

  Denote by $\overr(D)$ the set of all overrings of the integral domain $D$. We observe that $\overr(D)$ is  a subset of $\FFO(D)$ (in fact, it is a subset of $\FF(D):=\FFO(D) \setminus \{(0)\}$, the set of all nonzero $D$-submodules of $K$). On the other hand,
the set $\overr(D)$ can be endowed with a topology, called the {\it Zariski topology},  having as  basic open sets the subsets  of the type $\texttt{B}(F) :=\overr(D[F])=\{T\in\overr(D)\mid F\subseteq T\}$, where $F$ is varying among the finite subsets of $K$.
If we denote by
 $\overr(D)^{{\mbox{\tiny{\texttt{zar}}}}}$ the topological space $\overr(D)$ with the Zariski topology and $\FFO(D)^{{\mbox{\tiny{\texttt{hk}}}}}$ (respectively, $\FF(D)^{{\mbox{\tiny{\texttt{hk}}}}}$) the space $\FFO(D)$ (respectively, $\FF(D)$) with the hull-kernel topology (respectively, topology induced from the hull-kernel topology of $\FFO(D)$)    then the   inclusion maps $\overr(D) \subseteq \FF(D)$  and $\overr(D) \subseteq \FFO(D)$  are not continuous.
 In fact,  the quotient field $K$ is the generic point of $\overr(D)^{{\mbox{\tiny{\texttt{zar}}}}}$ but it  is a closed point for 
 $\FF(D)^{{\mbox{\tiny{\texttt{hk}}}}}$ (and for $\FFO(D)^{{\mbox{\tiny{\texttt{hk}}}}}$).

  Recall that $\overr(D)^{{\mbox{\tiny{\texttt{zar}}}}}$ is a spectral space   \cite[Proposition 3.5(2)]{Fi} and denote by $\overr(D)^{{\mbox{\tiny\rm{\texttt{inv}}}}}$ (respectively, $\overr(D)^{\mbox{\tiny{\texttt{hk}}}}$) the set $\overr(D)$ with the inverse topology  (respectively, with the hull-kernel topology, induced from $\FFO(D)^{\mbox{\tiny{\texttt{hk}}}}$).
  
\begin{prop}
For any domain $D$, $\overr(D)^{\mbox{\tiny\rm{\texttt{hk}}}}$ coincides with $\overr(D)^{{\mbox{\tiny\rm{\texttt{inv}}}}}$.
\end{prop}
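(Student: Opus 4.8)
The plan is to compare the hull-kernel and inverse topologies on the underlying set $\overr(D)$ by matching their closed sets directly. The crucial observation is that the subbasic \emph{closed} sets of the hull-kernel topology are exactly the basic \emph{open} sets of the Zariski topology; this is precisely the open–closed swap that defines the inverse topology, so the equality should drop out after a formal manipulation of sub-bases.

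First I would record a subbasis of closed sets for each topology. By definition, the hull-kernel topology induced on $\overr(D)$ from $\FFO(D)^{\mbox{\tiny{\texttt{hk}}}}$ has as a subbasis of closed sets the traces $\bV(x)\cap\overr(D)=\{T\in\overr(D)\mid x\in T\}=\texttt{B}(\{x\})$, as $x$ ranges over $K$. On the other hand, since $\overr(D)^{\mbox{\tiny{\texttt{zar}}}}$ is a spectral space by \cite[Proposition 3.5(2)]{Fi}, the inverse topology $\overr(D)^{\mbox{\tiny{\texttt{inv}}}}$ has, by definition, as a basis of closed sets the collection of quasi-compact open subspaces of $\overr(D)^{\mbox{\tiny{\texttt{zar}}}}$.

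Next I would describe these quasi-compact opens explicitly. Each basic Zariski open $\texttt{B}(F)=\overr(D[F])$ is quasi-compact: the subspace topology it inherits from $\overr(D)^{\mbox{\tiny{\texttt{zar}}}}$ agrees with the Zariski topology of $\overr(D[F])$ (indeed $\texttt{B}(G)\cap\texttt{B}(F)=\texttt{B}(F\cup G)$), which is spectral, again by \cite[Proposition 3.5(2)]{Fi}. Since the $\texttt{B}(F)$ form a basis of the Zariski topology closed under finite intersection, the quasi-compact opens of $\overr(D)^{\mbox{\tiny{\texttt{zar}}}}$ are exactly the finite unions $\texttt{B}(F_1)\cup\cdots\cup\texttt{B}(F_n)$; this description of the quasi-compact opens is the only point requiring any care, everything else being purely formal.

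With both descriptions in hand, the equality follows by a symmetric closed-set argument. On one side, every $\texttt{B}(F)=\bigcap_{x\in F}\texttt{B}(\{x\})$ is a finite intersection of hull-kernel subbasic closed sets, hence hull-kernel closed; since finite unions and arbitrary intersections of hull-kernel closed sets are hull-kernel closed, every quasi-compact open of $\overr(D)^{\mbox{\tiny{\texttt{zar}}}}$, and therefore every inverse-closed set, is hull-kernel closed. On the other side, each subbasic hull-kernel closed set $\texttt{B}(\{x\})$ is itself a quasi-compact open of $\overr(D)^{\mbox{\tiny{\texttt{zar}}}}$, hence a basic inverse-closed set; passing again to finite unions and arbitrary intersections shows that every hull-kernel closed set is inverse-closed. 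The two collections of closed sets thus coincide, and so do the two topologies. As a bonus, this identifies $\overr(D)^{\mbox{\tiny{\texttt{hk}}}}$ with the inverse of a spectral space, hence shows it is itself spectral.
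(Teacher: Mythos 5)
Your proof is correct and follows essentially the same route as the paper: both arguments identify the traces $\bV(F)\cap\overr(D)=\texttt{B}(F)$ as the generating closed sets of the hull-kernel topology and the finite unions of the $\texttt{B}(F)$ as the quasi-compact open subspaces of $\overr(D)^{\mbox{\tiny\rm{\texttt{zar}}}}$, i.e.\ the basic closed sets of the inverse topology, and then match the two families. The only difference is that you explicitly justify the quasi-compactness of each $\texttt{B}(F)$ by identifying it with the spectral space $\overr(D[F])^{\mbox{\tiny\rm{\texttt{zar}}}}$, a point the paper's proof takes for granted.
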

\begin{proof}
By definition of   the inverse topology, a basis for the closed sets of $\overr(D)^{\mbox{\tiny\rm{\texttt{inv}}}}$ is  given by the quasi-compact open subspaces of  $\overr(D)^{{\mbox{\tiny{\texttt{zar}}}}}$, i.e., by  the finite unions of  the subsets
$\texttt{B}(F)$, where $F$ is varying among the finite subsets of $K$.
On the other hand, by definition,   $\overr(D[F]) = \bV(F)$.
  Moreover, if $G$ is any subset of $K$, then $\bV(G)=\bigcap\{\bV(F)\mid F\subseteq G$  and $F$  is finite$\}$, 
  so that $\{\bV(F)\mid F\text{~is finite subset of } K\}$ is a basis for the \ closed \   sets  \ of  \ the \  topological \ space \ $\overr(D)^{{\mbox{\tiny\rm{\texttt{hk}}}}}$.
   Therefore, we conclude that  $\overr(D)^{{\mbox{\tiny\rm{\texttt{hk}}}}}= \overr(D)^{{\mbox{\tiny\rm{\texttt{inv}}}}}$.
\end{proof}

Given a ring $R$, on any $R$-module $M$,   a \emph{closure operation} on $\submod(M|R)$   is   a map $(-)^c:\submod(M|R)\rightarrow\submod(M|R)$ 
that is extensive (i.e., $N \subseteq N^c$), 
order-preserving (i.e., $N_1 \subseteq N_2$ implies  $N_1^c \subseteq N_2^c$) and idempotent (i.e., $(N^c)^c = N^c$). 
We also say that $c$ is \emph{of finite type} if, for any $N\in\submod(M|R)$, $N^c=\bigcup\{L^c\mid L\subseteq N, L \in \submod(M|R), L\text{~is finitely generated}\}$.   For a deeper insight on this topic see, for example, \cite{elliott}, \cite{Epst2015},  \cite{Epst2012},    \cite{hk}, and  \cite{va}.

\begin{prop}\label{prop:closure}
Let $M$ be an $R$-module and $c$ be a closure operation of finite type on $\submod(M|R)$. The set $\submod^c(M|R):=\{N\in\submod(M|R) \mid N=N^c\}$ is a spectral space. Moreover, $\submod^c(M|R)$ is closed in $\submod(M|R)$, endowed with the constructible topology.
\end{prop}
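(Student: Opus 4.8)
The plan is to establish the second assertion first --- that $\submod^c(M|R)$ is closed in the constructible topology --- and then to get spectrality for free: by \cite[Proposition 9]{ho}, every closed subset of $X^{\mbox{\tiny\texttt{cons}}}$ (where $X:=\submod(M|R)$) is a spectral subspace of $X$ in the hull-kernel topology, so closedness immediately yields the first assertion. To prove closedness I would invoke the ultrafilter criterion of Theorem \ref{spectral-ultra}(2), using the subbasis $\mathcal S=\{\bD(x_1,\ldots,x_n)\mid x_i\in M\}$ of quasi-compact open sets furnished by Proposition \ref{prop:submod}.

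Set $Y:=\submod^c(M|R)$ and fix an ultrafilter $\mathscr V$ on $Y$; the goal is $Y(\mathscr V)\subseteq Y$. Following the proof of Proposition \ref{prop:submod}, define $N_{\mathscr V}:=\{y\in M\mid \bV(y)\cap Y\in\mathscr V\}$; the very same computation shows $N_{\mathscr V}$ is an $R$-submodule of $M$. A short unwinding of the subbasic conditions defining $Y(\mathscr V)$ shows $Y(\mathscr V)\subseteq\{N_{\mathscr V}\}$: if $N\in Y(\mathscr V)$, then applying the defining equivalence to $S=\bD(x)$ and passing to the complementary set $\bV(x)$ gives, for each $x\in M$, that $x\in N$ iff $N\in\bV(x)$ iff $\bV(x)\cap Y\in\mathscr V$ iff $x\in N_{\mathscr V}$, whence $N=N_{\mathscr V}$. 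Thus everything reduces to the single membership $N_{\mathscr V}\in Y$, i.e.\ to showing that $N_{\mathscr V}$ is $c$-closed.

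Since $c$ is extensive, only $(N_{\mathscr V})^c\subseteq N_{\mathscr V}$ requires proof, and this is where the finite-type hypothesis enters --- the crux of the whole argument. Take $x\in (N_{\mathscr V})^c$; by finite type there is a finitely generated submodule $L=\langle y_1,\ldots,y_k\rangle\subseteq N_{\mathscr V}$ with $x\in L^c$. Each $y_i\in N_{\mathscr V}$ yields $\bV(y_i)\cap Y\in\mathscr V$, so by closure of $\mathscr V$ under finite intersection $\bV(y_1,\ldots,y_k)\cap Y\in\mathscr V$. Now for every $N$ in this set one has $L\subseteq N$ and, crucially, $N\in Y$ forces $N=N^c\supseteq L^c\ni x$; hence $\bV(y_1,\ldots,y_k)\cap Y\subseteq\bV(x)\cap Y$. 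As $\mathscr V$ is a filter, $\bV(x)\cap Y\in\mathscr V$, that is $x\in N_{\mathscr V}$. This proves $N_{\mathscr V}=(N_{\mathscr V})^c$.

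Therefore $Y(\mathscr V)\subseteq\{N_{\mathscr V}\}\subseteq Y$ for every ultrafilter $\mathscr V$ on $Y$, so Theorem \ref{spectral-ultra}(2) gives that $Y$ is closed in the constructible topology of $X$, and spectrality follows from \cite[Proposition 9]{ho} as noted. The main obstacle is precisely the $c$-closedness of $N_{\mathscr V}$: it is exactly here that finite type is indispensable, since without it an element of $(N_{\mathscr V})^c$ need not be witnessed by finitely many generators lying inside $N_{\mathscr V}$, and the filter argument transferring $c$-closedness from the members $N$ of $Y$ to the limit submodule $N_{\mathscr V}$ would collapse.
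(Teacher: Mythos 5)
Your proof is correct and follows essentially the same route as the paper: the heart of both arguments is the identical finite-type computation showing that $N_{\mathscr V}=(N_{\mathscr V})^c$ via $\bV(L)\cap\submod^c(M|R)\subseteq\bV(x)\cap\submod^c(M|R)$ for a finitely generated witness $L$. The only (immaterial) difference is the order of deduction: you prove constructible-closedness first via Theorem \ref{spectral-ultra}(2) and then invoke \cite[Proposition 9]{ho} for spectrality, whereas the paper gets spectrality directly from Theorem \ref{spectral-ultra}(1) and then reads off closedness from part (2).
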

\begin{proof}
With the same notation of the proof of Proposition \ref{prop:submod},  to prove the first statement we only need to show that, if $\mathscr{U}$ is an ultrafilter on $\submod^c(M|R)$, $N_\mathscr{U}$ is also in $\submod^c(M|R)$.

Let  $x\in  (N_{\mathscr{U}})^c$. Since $c$ is of finite type, there is a finitely generated $R$-module $L\subseteq N_{\mathscr{U}}$ such that $x\in L^c$.  In particular, $x\in H^c$ for all $H\supseteq L$, i.e., for all $H\in\bV(L)$; therefore, $\bV(L)\cap\submod^c(M|R)\subseteq\bV(x)\cap\submod^c(M|R)$.
If $L=\ell_1R+ \ell_2R+\cdots+\ell_nR$, then $\bV(L)=\bV(\ell_1)\cap \bV(\ell_2)\cap\cdots\cap\bV(\ell_n)$. 
Since each $\bV(\ell_i)\cap\submod^c(M|R)$ is in $\mathscr{U}$ (by definition of $N_{\mathscr{U}}$),
 then $\bV(L)\cap\submod^c(M|R)\in\mathscr{U}$. 
 Hence, $\bV(x)\cap\submod^c(M|R)\in\mathscr{U}$,
  i.e., $x\in N_{\mathscr{U}}$.
   Thus, $N_{\mathscr{U}}=(N_{\mathscr{U}})^c$ and $\submod^c(M|R)$ is a spectral space.
   
   Finally, from Theorem \ref{spectral-ultra}(2) we deduce that $\submod^c(M|R)$ is a closed subspace of  $\submod(M|R)$, endowed with the constructible topology. 
\end{proof}

\begin{cor}
Let $D$ be an integral domain and $\star$ be a semistar operation  of finite type on $D$  (for background material on semistar operations see, for  instance,  \cite{ Epst2015, Fi-Sp, Ok-Ma}). Then, the subspaces 
$$
\FFO(D)^\star:=\{E \in \FFO(D)\mid E^\star=E \}\quad \mbox{and}\quad \overr^\star(D):=\{T\in\overr(D)\mid   T=T^\star\}
$$
of $\FFO(D)^{{\mbox{\tiny{\texttt{hk}}}}}$ are  spectral spaces. 
\end{cor}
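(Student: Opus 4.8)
The plan is to treat the two subspaces separately, reducing each to Proposition~\ref{prop:closure} together with the fact that a constructibly closed subset of a spectral space is itself a spectral subspace (\cite[Proposition 9]{ho}); recall that $\FFO(D)=\submod(K\mid D)$ is spectral by Corollary~\ref{prop:submod-id}(3). For the first subspace, the key remark is that a semistar operation of finite type on $D$ is, once we set $(0)^\star:=(0)$, precisely a closure operation of finite type on $\submod(K\mid D)=\FFO(D)$ in the sense recalled before Proposition~\ref{prop:closure}. Hence $\FFO(D)^\star=\submod^\star(K\mid D)$, and Proposition~\ref{prop:closure} gives at once that $\FFO(D)^\star$ is spectral and, moreover, that it is closed in $\FFO(D)^{\mbox{\tiny\texttt{cons}}}$; this last fact will be reused below.

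For $\overr^\star(D)$ I would start from the set-theoretic identity $\overr^\star(D)=\overr(D)\cap\FFO(D)^\star$, since a $\star$-closed overring is exactly an overring fixed by $\star$. As $\FFO(D)^\star$ is already constructibly closed, it is enough to show that $\overr(D)$ is closed in $\FFO(D)^{\mbox{\tiny\texttt{cons}}}$: then $\overr^\star(D)$, being an intersection of two constructibly closed sets, is constructibly closed, and hence a spectral subspace of $\FFO(D)^{{\mbox{\tiny{\texttt{hk}}}}}$ by \cite[Proposition 9]{ho}. To prove that $\overr(D)$ is constructibly closed I would invoke the ultrafilter criterion of Theorem~\ref{spectral-ultra}(2), taking $X=\submod(K\mid D)$ with its subbasis $\mathcal S=\{\bD(x_1,\z,x_n)\}$ of quasi-compact open sets. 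Given an ultrafilter $\ms V$ on $\overr(D)$, the proof of Proposition~\ref{prop:submod} shows that $N_{\ms V}:=\{y\in K\mid \bV(y)\cap\overr(D)\in\ms V\}$ is a $D$-submodule of $K$ and is the unique point of $\overr(D)(\ms V)$, so the whole matter reduces to checking $N_{\ms V}\in\overr(D)$. That $1\in N_{\ms V}$ is immediate, since $\bV(1)\cap\overr(D)=\overr(D)\in\ms V$. For multiplicative closure, if $x,y\in N_{\ms V}$ then $\bV(x)\cap\overr(D)$ and $\bV(y)\cap\overr(D)$ lie in $\ms V$, hence so does their intersection; as every $T$ in that intersection is a ring containing $x$ and $y$, one has $\bV(x)\cap\bV(y)\cap\overr(D)\subseteq\bV(xy)\cap\overr(D)$, whence $\bV(xy)\cap\overr(D)\in\ms V$ and $xy\in N_{\ms V}$. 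Thus $N_{\ms V}$ is a subring of $K$ containing $D$, i.e.\ an overring, and $\overr(D)$ is constructibly closed.

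I expect the only substantive step to be this last verification that an ultrafilter limit of overrings is again an overring, namely the multiplicative-closure computation; everything else is either a direct citation of Proposition~\ref{prop:closure} and \cite[Proposition 9]{ho}, or the routine bookkeeping behind the identity $\overr^\star(D)=\overr(D)\cap\FFO(D)^\star$. The one convention worth flagging is the extension $(0)^\star:=(0)$, which is what lets us regard a classically-defined semistar operation as a closure operation on all of $\submod(K\mid D)$.
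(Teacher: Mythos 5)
Your proposal is correct and follows essentially the same route as the paper: reduce to Proposition~\ref{prop:closure} (viewing the finite-type semistar operation, extended by $(0)^\star:=(0)$, as a finite-type closure operation on $\submod(K|D)$) and to the constructible-closedness of $\overr(D)$, then conclude via \cite[Proposition 9]{ho}. The only difference is that where the paper simply cites the proof of \cite[Proposition 3.5]{Fi} for the constructible-closedness of $\overr(D)$, you carry out the ultrafilter verification explicitly, and your computation is accurate.
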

\begin{proof}
By applying Proposition \ref{prop:closure} and the proof of \cite[Proposition 3.5]{Fi} we note that $\FFO(D)^\star $ and   $\overr^\star (D)$ are closed in $\FFO(D)$, endowed with  the constructible topology. Then, the conclusion follows by \cite[Proposition 9]{ho}.
\end{proof}

\begin{cor}\label{prop:closure-proper}
Let $c$ be a closure operation of finite type on a ring $R$. Then, $\mbox{\rm \texttt{Id}}^c({R}):=\submod^c(R|R)$ 
(respectively, $\mbox{\rm \texttt{Id}}_{_\bullet}^c({R}):=\submod^c(R|R)\setminus\{R\}$), endowed with the hull-kernel topology, 
is a spectral  space.
\end{cor}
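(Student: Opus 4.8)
The plan is to deduce both assertions from machinery already established, principally Proposition \ref{prop:closure} together with Hochster's result \cite[Proposition 9]{ho} that every subset of a spectral space which is closed in the constructible topology is again a spectral space.

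The claim for $\texttt{Id}^c(R)$ is immediate. Since $\texttt{Id}(R)=\submod(R|R)$ and $c$ is a closure operation of finite type on $\submod(R|R)$, one simply invokes Proposition \ref{prop:closure} with $M=R$. This yields at once that $\texttt{Id}^c(R)=\submod^c(R|R)$ is a spectral space, and in addition that it is closed in $\texttt{Id}(R)$ endowed with the constructible topology. I would record this second fact explicitly, as it is the key input for the proper case.

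For $\texttt{Id}_\bullet^c(R)=\texttt{Id}^c(R)\setminus\{R\}$ the only genuinely new point is that deleting the top element $R$ keeps us inside the constructibly closed sets. The observation that makes this transparent is the identification $\texttt{Id}_\bullet(R)=\bD(1)$: the only ideal containing $1$ is $R$ itself, so $\bV(1)=\{R\}$ and hence $\bD(1)=\texttt{Id}(R)\setminus\{R\}$. By Proposition \ref{prop:submod}, $\bD(1)$ belongs to the subbasis $\mathcal S$ of quasi-compact open subspaces of $\texttt{Id}(R)$; consequently $\bD(1)$ is clopen in the constructible topology, so $\texttt{Id}_\bullet(R)$ is closed in $\texttt{Id}(R)^{\mbox{\tiny\texttt{cons}}}$. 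Therefore
$$
\texttt{Id}_\bullet^c(R)=\texttt{Id}^c(R)\cap\texttt{Id}_\bullet(R)
$$
is an intersection of two subsets closed in the constructible topology of $\texttt{Id}(R)$, hence itself closed there, and a final application of \cite[Proposition 9]{ho} gives that $\texttt{Id}_\bullet^c(R)$ is a spectral space.

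I do not expect a serious obstacle: the substance has already been discharged in Proposition \ref{prop:closure}. The one step requiring care is the passage from $\texttt{Id}^c(R)$ to $\texttt{Id}_\bullet^c(R)$, that is, verifying that removing $R$ does not destroy spectrality, and I expect this to be the crux. The clean way to handle it is the identification $\texttt{Id}_\bullet(R)=\bD(1)$, which shows we are merely intersecting with a quasi-compact open — equivalently, a constructibly clopen — set rather than with an arbitrary open set. As an alternative to the constructible-topology argument, one could run the ultrafilter computations of Proposition \ref{prop:closure} and Corollary \ref{prop:submod-id}(1) in parallel: for an ultrafilter $\mathscr{U}$ on $\texttt{Id}_\bullet^c(R)$ the ideal $N_{\mathscr{U}}$ is $c$-closed by the argument of Proposition \ref{prop:closure} and is proper by that of Corollary \ref{prop:submod-id}(1) (if $1\in N_{\mathscr{U}}$ then $\bV(1)\cap\texttt{Id}_\bullet^c(R)=\emptyset\in\mathscr{U}$, a contradiction), so that Theorem \ref{spectral-ultra}(1) applies directly.
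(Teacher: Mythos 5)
Your proposal is correct, and both halves of it are sound. The paper's own proof is the one you offer as an ``alternative'': it simply cites Proposition~\ref{prop:closure} for $\mbox{\rm \texttt{Id}}^c(R)$ and then repeats, for an ultrafilter on $\mbox{\rm \texttt{Id}}_{_\bullet}^c(R)$, the argument of Corollary~\ref{prop:submod-id}(1) showing that $N_{\mathscr{U}}$ is proper because $\bV(1)\cap \mbox{\rm \texttt{Id}}_{_\bullet}^c(R)=\emptyset$ cannot lie in an ultrafilter. Your primary route packages this differently: you observe that $\mbox{\rm \texttt{Id}}_{_\bullet}(R)=\bD(1)$ is a (subbasic) quasi-compact open, hence clopen in the constructible topology, and that $\mbox{\rm \texttt{Id}}^c(R)$ is constructibly closed by Proposition~\ref{prop:closure}, so their intersection is constructibly closed and \cite[Proposition 9]{ho} applies. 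This is a legitimate and arguably cleaner variant, since it replaces a second ultrafilter computation with a purely formal intersection of constructibly closed sets; the two routes are of course closely related, because the ultrafilter condition of Theorem~\ref{spectral-ultra}(2) is exactly the criterion for constructible closedness. The only point worth making explicit in your write-up is the (trivial) set identity $\mbox{\rm \texttt{Id}}_{_\bullet}^c(R)=\mbox{\rm \texttt{Id}}^c(R)\cap\bD(1)$, which you do state; nothing further is needed.
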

\begin{proof}
The statements follow from   Proposition \ref{prop:closure} and its proof,  using  the same  argument  of  the proof of Corollary \ref{prop:submod-id}\ref{prop:submod-id:id}.
\end{proof}

\begin{oss}\label{notspectral}
If $c$ is a closure operation of finite type on an $R$-module $M$,  we can always consider a canonical surjective map $\psi_c:\submod(M|R)\longrightarrow\submod^c(M|R)$, by setting $\psi_c(N):=N^c$, for each $N \in \submod(M|R)$. 
However, $\psi_c$ is only rarely continuous (with respect to the hull-kernel topology). For example, let   $M=R$ be any infinite ring such that the intersection of all nonzero ideals is $(0)$   (such a ring is, for example, an integral domain that is not a field).
 Set $(0)^c :=(0)$, and set $I^c$ to be equal to $R$ if $I\neq(0)$.  Therefore, $\submod^c(R|R) =\mbox{\rm \texttt{Id}}^c({R})= \{(0), R\}$.
Note that $\psi_c^{-1}(R)=\{I\mid I\neq(0)\} = \mbox{\rm \texttt{Id}}({R})\! \setminus\! \{(0)\}$.
Since $R$ is a closed point in $\submod(R|R) =\mbox{\rm \texttt{Id}}({R})$ (endowed with the hull-kernel topology) and $R = R^c$, then $R$ is a closed point in $\submod^c(R|R) =\mbox{\rm \texttt{Id}}^c({R})$ (endowed with the hull-kernel topology).
 If $\psi_c$ were continuous, $\psi_c^{-1}({R})=\mbox{\rm \texttt{Id}}({R})\! \setminus\! \{(0)\}$ would be closed and thus (being a closed subset of a spectral space) it would be a spectral space itself.  
 However, $\mbox{\rm \texttt{Id}}({R})\! \setminus\! \{(0)\}$ cannot be a spectral space, when endowed with the hull-kernel topology induced from $\mbox{\rm \texttt{Id}}({R})$,   since $\mbox{\rm \texttt{Id}}({R})\! \setminus\! \{(0)\}$ is not quasi-compact. 
 Indeed,   by assumption, the intersection of all nonzero ideals of $R$ is (0), and  thus  the collection of sets $\{\bD(x)\setminus\{(0)\}\mid x\neq 0 \} $ provides  an  infinite open cover of $\mbox{\rm \texttt{Id}}({R})\! \setminus\! \{(0)\}$   without   finite subcovers.
\end{oss}

As a particular case of the Proposition \ref{prop:closure}  and  Corollary \ref{prop:closure-proper}, we have  the following.

\begin{cor}\label{Rd}
Let $R$ be a ring. The  sets $ \rd({R})$ and 
$ \rd(R) \cup \{R\}$, 
 endowed with the hull-kernel topology, are spectral spaces.
\end{cor}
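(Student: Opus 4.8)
The plan is to exhibit $\rd(R)$ and $\rd(R)\cup\{R\}$ as the fixed-point spaces of a finite-type closure operation on $\ideals(R)=\submod(R|R)$, and then to invoke Corollary \ref{prop:closure-proper}. The natural candidate is the radical map $\rad\colon\ideals(R)\to\ideals(R)$, $I\mapsto\rad(I)$. First I would check that $\rad$ is a closure operation: it is extensive since $I\subseteq\rad(I)$, order-preserving since $I_1\subseteq I_2$ forces $\rad(I_1)\subseteq\rad(I_2)$, and idempotent since $\rad(I)$ is already a radical ideal, so $\rad(\rad(I))=\rad(I)$. All three properties are immediate from the definition $\rad(I)=\{x\in R\mid x^n\in I\text{ for some }n\geq 1\}$.

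The only condition requiring an argument is finite type, and this is the step I would single out, though it remains elementary. Given an ideal $I$ and an element $x\in\rad(I)$, there is some $n\geq 1$ with $x^n\in I$; then the principal ideal $(x^n)$ is a finitely generated subideal of $I$ and $x\in\rad((x^n))$. Conversely, if $x\in\rad(L)$ for some finitely generated $L\subseteq I$, then $x\in\rad(I)$ by monotonicity. Hence $\rad(I)=\bigcup\{\rad(L)\mid L\subseteq I,\ L\text{ finitely generated}\}$, so $\rad$ is a closure operation of finite type.

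It remains to identify the spaces. By definition $\submod^{\rad}(R|R)=\{I\in\ideals(R)\mid I=\rad(I)\}$ is precisely the set of radical ideals of $R$; since $R=\rad(R)$, this set is $\rd(R)\cup\{R\}$, and deleting the improper ideal yields $\ideals_{_\bullet}^{\rad}(R)=\rd(R)$, the set of proper radical ideals. Applying Corollary \ref{prop:closure-proper} with $c=\rad$ then shows that both $\rd(R)$ and $\rd(R)\cup\{R\}$, endowed with the hull-kernel topology, are spectral spaces. I do not expect any genuine obstacle here: the entire content of the argument is the verification that $\rad$ is a finite-type closure operation, after which the conclusion is purely formal.
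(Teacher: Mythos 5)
Your proposal is correct and follows exactly the paper's own route: verify that $\rad$ is a finite-type closure operation on $\ideals(R)$ (via the observation that $x\in\rad(I)$ implies $x\in\rad((x^n))$ for some $x^n\in I$), identify $\rd(R)\cup\{R\}$ with $\ideals^{\rad}(R)$ and $\rd(R)$ with $\ideals_{_\bullet}^{\rad}(R)$, and invoke Corollary \ref{prop:closure-proper}. You merely spell out the extensivity, monotonicity and idempotence checks that the paper leaves implicit.
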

\begin{proof}
As usual, let $\rad(I)$ denote the radical of an ideal $I$ of $R$. If $x\in\rad(I)$, then $x\in\rad(x^n)$ for some $x^n\in I$, so $\rad$ is a closure operation of finite type in 
 $\mbox{\rm \texttt{Id}}({R})$, i.e.,  $ \rd(R) \cup \{R\} = \mbox{\rm \texttt{Id}}^c({R}) $, where $c= \rad$.   The conclusion is now a consequence of Corollary \ref{prop:closure-proper}.
\end{proof}

\section{A topological version of Hilbert's Nullstellensatz}

 Let now $X$ be a spectral space and let $\chius{(Y)}$ denote the closure of a subspace $Y$ in the given topology of $X$. 
Let $\boldsymbol{\mathcal{X}^\prime}(X)$ be  the space of nonempty closed sets of $X$, and   endow it with a topology whose subbasic open sets are the family of sets
\begin{equation*}
\boldsymbol{\mathcal{U}^\prime}(\Omega):=\{Y\in \boldsymbol{\mathcal{X^\prime}}(X) \mid Y\cap\Omega =\emptyset \},
\end{equation*}
as $\Omega$ ranges among the quasi-compact open subspaces of $X$. Note that the family of sets of the type $\boldsymbol{\mathcal{U}^\prime}(\Omega)$ forms a basis, since $\boldsymbol{\mathcal{U}^\prime}(\Omega_1)  \cap \boldsymbol{\mathcal{U}^\prime}(\Omega_2)=\boldsymbol{\mathcal{U}^\prime}(\Omega_1 \cup \Omega_2)$. 
We call this topology the \emph{Zariski topology}   of the space $\boldsymbol{\mathcal{X}^\prime}(X)$. The notation used here is chosen in analogy and for coherence with the construction of the space $\xcal(X)$, which is sketched in \cite{FiFoSp-survey} and elaborated upon in \cite{FiFoSp-1}.

Note that there is a canonical injective map $\varphi':X^{\mbox{\rm\tiny\texttt{inv}}}\rightarrow\xcal'(X)^{\mbox{\rm\tiny\texttt{zar}}}$, defined by $\varphi'(x):=\{x\}^{\mbox{\rm\tiny\texttt{sp}}}$,  which is a topological embedding.
Indeed, $\varphi'$ is continuous since
\begin{equation*}
\varphi'^{-1}(\ucal'(\Omega))=\{x\in X^{\mbox{\rm\tiny\texttt{sp}}}\mid \{x\}^{\mbox{\rm\tiny\texttt{sp}}}\cap\Omega=\emptyset\}=
 X\setminus\Omega\,,
\end{equation*}
which is, by definition, a subbasic open set of $X^{\mbox{\rm\tiny\texttt{inv}}}$. 
Moreover,  since the family of the sets of the type  $X\setminus\Omega$, for $\Omega$ ranging among the quasi-compact open subspaces  of $X$, forms a subbasis of $X^{\mbox{\rm\tiny\texttt{inv}}}$, the calculation above shows that $\varphi'(X \setminus\Omega)=\ucal'(\Omega)\cap\varphi'(X)$, and thus  the map $\varphi'$ is a topological embedding.

Now,  we are in condition to  state a ``topological version" of  the Hilbert Nullstellensatz.

 \begin{teor}\label{prop:rad-x1}
Let $R$ be a ring and let $ \boldsymbol{\mathcal{X}^\prime}( R) :=  \boldsymbol{\mathcal{X}^\prime}{\hskip-1pt}(\spec( R))$ be the topological space of the non-empty Zariski closed subspaces of $\spec( R)$, endowed with the Zariski topology. 
Let $\mbox{\rm\texttt{Rd}}({R})$ be the spectral space of all proper radical ideals of $R$ with the inverse topology. Then, the map
\begin{align*}
\mathscr{J}\colon \boldsymbol{\mathcal{X}^\prime}{\hskip-1pt}({R})^{\mbox{\rm\tiny\texttt{zar}}}& \rightarrow\mbox{\rm\texttt{Rd}}({R})^{\mbox{\rm\tiny\texttt{inv}}}\\C\;\; & \mapsto \bigcap\{P\in\spec(R) \mid P\in C\}
\end{align*}
is  a homeomorphism. 
In particular, $\boldsymbol{\mathcal{X}^\prime}({R})$ is a spectral space. Moreover, the same map $\ms J$ defines a homeomorphism   between $\boldsymbol{\mathcal{X}^\prime}( R)^{\mbox{\rm\tiny\texttt{inv}}}$\!   and \  $\rd(R)^{\mbox{\rm\tiny\texttt{hk}}}$.
\end{teor}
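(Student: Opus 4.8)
The plan is to establish both homeomorphisms at once, deriving the final \emph{inv}/\emph{hk} statement from the main \emph{zar}/\emph{inv} one by exploiting that passage to the inverse topology is an involution on spectral spaces. First I would record that $\mathscr{J}$ is the order-reversing bijection already described in the introduction, with inverse $\texttt{V}\colon H\mapsto\{P\in\spec(R)\mid H\subseteq P\}$: for a nonempty closed $C$ the ideal $\bigcap_{P\in C}P$ is radical (an intersection of primes) and proper (it lies inside any $P\in C$), while $\texttt{V}(H)$ is nonempty (as $H$ is proper) and closed, and the identities $\texttt{V}(\mathscr{J}(C))=C$ and $\mathscr{J}(\texttt{V}(H))=\rad(H)=H$ are routine. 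This step needs only a one-line justification.

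The computational heart is the following identity. Writing $D(f):=\{P\in\spec(R)\mid f\notin P\}$ for $f\in R$, one has
$$\ucal'(D(f))=\{C\in\xcal'(R)\mid C\cap D(f)=\emptyset\}=\{C\mid f\in\mathscr{J}(C)\},$$
since $C\cap D(f)=\emptyset$ means exactly that every prime of $C$ contains $f$. As each quasi-compact open $\Omega$ of $\spec(R)$ is a finite union $D(f_1)\cup\cdots\cup D(f_n)$ and $\ucal'(\Omega)=\bigcap_i\ucal'(D(f_i))$, the sets $\ucal'(D(f))$ form a subbasis of $\xcal'(R)^{\mbox{\rm\tiny\texttt{zar}}}$; under $\mathscr{J}$ this subbasic open corresponds to $\{H\in\rd(R)\mid f\in H\}=\bV(f)\cap\rd(R)$. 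The other side is where the real work sits: $\bD(f)\cap\rd(R)$ is a quasi-compact open of $\rd(R)^{\mbox{\tiny{\texttt{hk}}}}$ — this uses that $\rd(R)$ is spectral (Corollary \ref{Rd}) together with Proposition \ref{prop:submod} and Corollary \ref{osservazione} — hence it is a basic closed set of $\rd(R)^{\mbox{\rm\tiny\texttt{inv}}}$, so its complement $\bV(f)\cap\rd(R)$ is open in $\rd(R)^{\mbox{\rm\tiny\texttt{inv}}}$.

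With these identities the first homeomorphism falls out. Continuity of $\mathscr{J}^{-1}=\texttt{V}$ is immediate, since the preimage of the subbasic open $\ucal'(D(f))$ is precisely $\bV(f)\cap\rd(R)$, open in $\rd(R)^{\mbox{\rm\tiny\texttt{inv}}}$. For continuity of $\mathscr{J}$, recall that the quasi-compact opens of $\rd(R)^{\mbox{\tiny{\texttt{hk}}}}$ form a basis of closed sets for $\rd(R)^{\mbox{\rm\tiny\texttt{inv}}}$, so it suffices to check that $\mathscr{J}^{-1}(\bD(f)\cap\rd(R))=\xcal'(R)\setminus\ucal'(D(f))$ is closed and to propagate this through the finite unions and intersections making up a general quasi-compact open. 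Thus $\mathscr{J}\colon\xcal'(R)^{\mbox{\rm\tiny\texttt{zar}}}\to\rd(R)^{\mbox{\rm\tiny\texttt{inv}}}$ is a homeomorphism, and since the inverse topology of a spectral space is again spectral \cite{ho}, $\xcal'(R)$ is spectral.

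Finally, because the inverse topology is built solely from the quasi-compact opens and is involutive, i.e. $(X^{\mbox{\rm\tiny\texttt{inv}}})^{\mbox{\rm\tiny\texttt{inv}}}=X$ \cite{ho}, any homeomorphism of spectral spaces remains a homeomorphism after passing to inverse topologies (a homeomorphism preserves quasi-compact opens, hence basic closed sets of the inverse topologies). Applying this to the homeomorphism just obtained yields a homeomorphism $\xcal'(R)^{\mbox{\rm\tiny\texttt{inv}}}\to(\rd(R)^{\mbox{\rm\tiny\texttt{inv}}})^{\mbox{\rm\tiny\texttt{inv}}}=\rd(R)^{\mbox{\tiny{\texttt{hk}}}}$, which is the asserted conclusion. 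I expect the only genuine obstacle to be the quasi-compactness of $\bD(f)\cap\rd(R)$ in the hull-kernel topology and the bookkeeping that translates ``closed in the inverse topology'' into ``quasi-compact open in the hull-kernel topology''; everything else is either classical or formal.
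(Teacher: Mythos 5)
Your proposal is correct and follows essentially the same route as the paper: it computes $\mathscr{J}^{-1}$ on the subbasic sets $\boldsymbol{\mathcal{U}^\prime}(\texttt{D}(f))$ versus $\bV(f)\cap\rd(R)$, invokes Corollary \ref{Rd}, Proposition \ref{prop:submod} and Corollary \ref{osservazione} for the quasi-compactness of $\bD(f)\cap\rd(R)$ in the hull-kernel topology (hence its closedness in the inverse topology), and derives the second homeomorphism from Hochster's duality $(\rd(R)^{\mbox{\rm\tiny\texttt{inv}}})^{\mbox{\rm\tiny\texttt{inv}}}=\rd(R)^{\mbox{\rm\tiny\texttt{hk}}}$. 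The only cosmetic difference is that you verify continuity of $\mathscr{J}^{-1}=\texttt{V}$ directly where the paper shows $\mathscr{J}$ is a closed bijection, and you work with single elements $f$ rather than finite tuples $x_1,\ldots,x_n$; both variations are equivalent.
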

\begin{proof}
For each $x_1,\z,x_n \in R$, let $\boldsymbol{\Delta}(x_1,\z,x_n):=\{H\in  \texttt{Rd}({R}) \mid (x_1,\z,x_n)\nsubseteq H\} = \boldsymbol{D}(x_1,\z,x_n) \cap\texttt{Rd}({R})$ be a subbasic open set of $\texttt{Rd}({R})$ and let $\texttt{D}(x_1,\z,x_n):= \{P \in \spec({R}) \mid x \notin P\}$ be a subbasic open set of $\spec({R})$. By the definition of the hull-kernel topology, 
by Corollaries \ref{osservazione}, \ref{Rd} and Proposition \ref{prop:submod}, it follows that $\mathcal B:=\{\boldsymbol{\Delta}(x_1,\z,x_n)\mid x_1,\z,x_n\in R \}$ is a collection of   quasi-compact open subspaces of $\rd(R)^{\mbox{\rm\tiny\texttt{hk}}}$, that is, it is a subbasis of closed sets of $\rd(R)^{\mbox{\rm\tiny\texttt{inv}}}$. 
Set $\boldsymbol{\mathcal{X}^\prime} := \boldsymbol{\mathcal{X}^\prime}({R})$. Then,
\begin{equation*}
\begin{array}{rcl}
\mathscr{J}^{-1}(\boldsymbol{\Delta}(x_1,\z,x_n)) & = & \{C \in \boldsymbol{\mathcal{X}^\prime} \mid\mathscr{J}({C})\in \boldsymbol{\Delta}(x_1,\z,x_n)\}=\\
 & = & \{C\in \boldsymbol{\mathcal{X}^\prime} \mid (x_1,\z,x_n)\nsubseteq  \mathscr{J}({C})\}=\\
 & = & \{C\in \boldsymbol{\mathcal{X}^\prime}\mid  (x_1,\z,x_n)\nsubseteq\ \bigcap\{P\in\spec(R) \mid P\in C\}\, \}=\\
 & = & \left\{C\in \boldsymbol{\mathcal{X}^\prime}\mid  x_i\notin P\text{~for some~}P\in C\mbox{ and some }i\right \}=\\
 & = & \{C\in \boldsymbol{\mathcal{X}^\prime} \mid C\cap \texttt{D}(x_1,\z,x_n)\neq\emptyset\}= \\ 
  & = &\boldsymbol{\mathcal{X}^\prime} \setminus \boldsymbol{\mathcal{U}^\prime}(\texttt{D}(x_1,\z,x_n))
\end{array}
\end{equation*}
which is, by definition,   a closed set of ${\boldsymbol{\mathcal{X}^\prime}}$.
 Hence, $\mathscr{J}$ is continuous (when $\rd(R)$ is equipped with the inverse topology). In order  to show that it is   a closed map,
 it is enough to note that $\{\boldsymbol{\mathcal{X}^\prime} \setminus \boldsymbol{\mathcal{U}^\prime}(\texttt{D}(x_1,\z,x_n))
 \mid x_1,\z,x_n\in R\}$ 
 is a basis  of closed sets of ${\boldsymbol{\mathcal{X}^\prime}}$ and that, by Hilbert Nullstellensatz, $\mathscr{J}$ is bijective;  hence 
 $\mathscr{J}(\boldsymbol{\mathcal{X}^\prime} \setminus \boldsymbol{\mathcal{U}^\prime}(\texttt{D}(x_1,\z,x_n)))=\boldsymbol{\Delta}(x_1,\z,x_n)$
  is closed in $ \texttt{Rd}({R})^{\texttt{inv}} $. 
Thus, $\mathscr{J}$ is a homeomorphism.

The last claim follows directly from Hochster's duality,   that is, more explicitly, from the fact that 
$(\texttt{Rd}({R})^{\texttt{inv}})^{\texttt{inv}}$ 
coincides with $\texttt{Rd}({R})^{\texttt{hk}}$.
\end{proof}

In the following, if $X$ is a topological space,  we will denote by $\dim(X)$ (respectively, $|X|$) the  dimension (respectively, the cardinality) of $X$. 

\begin{prop}
Let $R$ be a ring and let  $\mbox{\rm\texttt{Rd}}({R})$   be the space of all proper radical ideals of $R$,  endowed with the hull-kernel topology.  Then
$$
\dim (\mbox{\rm\texttt{Rd}}({R}))=|\spec(R)|-1\geq \dim(\spec({R}))\,.
$$
Moreover, if $\spec(R)$ is linearly ordered, then $\dim(\mbox{\rm\texttt{Rd}}({R}))=\dim(\spec({R}))$. 
\end{prop}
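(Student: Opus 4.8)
The plan is to reduce the whole statement to an order-theoretic count of chains. Since the hull-kernel topology on $\submod(R|R)=\texttt{Id}(R)$ induces set-theoretic inclusion as its specialization order, the same holds for the subspace $\rd(R)$; and the Krull dimension of a spectral space is the supremum of the lengths of its finite chains in the specialization order. Hence
\[
\dim(\rd(R)) = \sup\{\,n \mid \exists\ H_0 \subsetneq H_1 \subsetneq \cdots \subsetneq H_n \text{ in } \rd(R)\,\}.
\]
(Via $\mathscr{J}$ such chains correspond, order-reversingly, to chains of nonempty Zariski-closed subsets of $\spec(R)$, which gives the geometric flavour, but the count below is purely algebraic.) Everything then amounts to bounding these chain lengths above by $|\spec(R)|-1$ and realizing that bound; when $\spec(R)$ is infinite, both $|\spec(R)|-1$ and $\dim(\rd(R))$ are to be read as $\infty$, and this reading must be fixed at the outset.

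For the upper bound I would argue as follows. Given a chain $H_0 \subsetneq \cdots \subsetneq H_n$ of proper radical ideals, for each $i<n$ pick $x_i \in H_{i+1}\setminus H_i$. Since $H_i=\rad(H_i)=\bigcap\{P\in\spec(R)\mid H_i\subseteq P\}$ and $x_i\notin H_i$, there is a prime $P_i\supseteq H_i$ with $x_i\notin P_i$; also fix any $P_n\supseteq H_n$. These $n+1$ primes are distinct: for $j>i$ we have $x_i\in H_{i+1}\subseteq H_j$, so $P_i\not\supseteq H_j$ while $P_j\supseteq H_j$, forcing $P_i\neq P_j$. Thus $n+1\le|\spec(R)|$, i.e.\ $\dim(\rd(R))\le|\spec(R)|-1$.

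For the lower bound I would use a single uniform device on finite subsets, which handles the finite and infinite cases together. Given any finite set $\{Q_1,\dots,Q_m\}$ of distinct primes, fix an enumeration refining inclusion, i.e.\ with $Q_a\subseteq Q_b\Rightarrow a\le b$, and set $H_i:=\bigcap_{j=i+1}^{m}Q_j$ for $0\le i\le m-1$. Each $H_i$ is a proper radical ideal and $H_0\subseteq\cdots\subseteq H_{m-1}$; the inclusions are strict, since $H_{i-1}=H_i$ would give $\bigcap_{j>i}Q_j\subseteq Q_i$, whence primeness of $Q_i$ yields $Q_j\subseteq Q_i$ for some $j>i$, contradicting the enumeration. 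This exhibits a chain of length $m-1$, so $\dim(\rd(R))\ge m-1$ for every finite subset of size $m$. Taking $m=|\spec(R)|$ when $\spec(R)$ is finite, and letting $m\to\infty$ otherwise, gives $\dim(\rd(R))\ge|\spec(R)|-1$, and combined with the upper bound the equality follows.

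The remaining two assertions are then short. The inequality $|\spec(R)|-1\ge\dim(\spec(R))$ holds because every chain of primes is in particular a chain of radical ideals. For the \emph{moreover} clause, if $\spec(R)$ is linearly ordered then any radical ideal $H=\bigcap\{P\mid H\subseteq P\}$ is the intersection of a totally ordered family of primes, hence itself prime; therefore $\rd(R)$ and $\spec(R)$ coincide as posets and share the same dimension (equivalently $\dim(\spec(R))=|\spec(R)|-1$, the whole spectrum being one chain of primes). The main obstacle I anticipate is exactly the lower bound: guaranteeing strict inclusions and covering an arbitrary, possibly infinite and non-totally-ordered spectrum in one stroke. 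The linear-extension trick applied to arbitrary finite subsets is what makes this uniform and avoids any appeal to stronger combinatorial input such as a chain/antichain dichotomy; the only other point requiring care is the convention for $|\spec(R)|-1$ in the infinite case.
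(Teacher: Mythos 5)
Your proof is correct and follows essentially the same strategy as the paper: reduce $\dim(\rd(R))$ to the supremum of lengths of inclusion-chains of proper radical ideals, realize the lower bound by intersecting a finite family of distinct primes listed along a linear extension of inclusion (the paper's iterative choice of minimal elements produces exactly such an enumeration), and bound chain lengths above by $|\spec(R)|$. The only cosmetic difference is in the upper bound, where the paper counts elements of the strictly decreasing closed sets $\texttt{V}(L_i)$ in the finite case, while you extract pairwise distinct separating primes $P_i$ directly from the chain of radical ideals --- an equivalent argument that also covers the infinite case uniformly (where the bound is vacuous anyway).
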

\begin{proof}
Let $X$ be a nonempty finite subset of $\spec(R)$, with $|X|=n$. Let $P_n$ be a minimal element of $X$ and, by induction, let $P_i$ be a minimal element of $X\setminus \{P_n,\ldots, P_{i+1} \}$, for $1\leq i\leq n-1$. 
Consider the radical ideals $  H_i:=\bigcap_{{\ell}=1}^i P_{\ell}$, for $i=1, 2, \ldots,n$. By construction, we have $P_i\nsupseteq  P_1,\ldots,
P_{i-1}$, for $i=2,\ldots n$, that is $P_i\nsupseteq H_{i-1}$. Thus, we get a strictly increasing chain of radical ideals of $R$ 
$$
H_n\subsetneq H_{n-1}\subsetneq \ldots\subsetneq H_1:= P_1\,.
$$
Since the order induced by the hull-kernel topology is   the set-theoretic inclusion, this chain corresponds to a chain of length $n-1$ of irreducible closed subspaces of $\mbox{\rm\texttt{Rd}}({R})$. 
Thus, when $\spec(R)$ is infinite, we can get, by applying the previous argument, chains of irreducible closed subsets of $\mbox{\rm\texttt{Rd}}({R})$ of arbitrary length. Thus, in this case, the equality $\dim(\mbox{\rm\texttt{Rd}}({R}))=|\spec(R)|-1$ is  proved. 
Assume now that $\spec(R)$ is finite. 
By applying the first part of the proof to $X:=\spec(R)$ we deduce immediately that $|\spec(R)|-1\leq \dim (\mbox{\rm\texttt{Rd}}({R}))$. Conversely, a chain of length $t$ of irreducible closed subspaces of $\mbox{\rm\texttt{Rd}}({R})$ corresponds to a chain of radical  ideals
$$
L_0\subsetneq L_1\subsetneq \ldots\subsetneq L_t
$$
and it provides the following chain of closed sets 
$$
\texttt{V}(L_t)\subsetneq \texttt{V}(L_{t-1})\subsetneq \ldots\subsetneq \texttt{V}(L_0)
$$
of $\spec(R)$.
Since $\spec(R)$ is finite, $\texttt{V}(L_0)$ has at most $|\spec(R)|$ elements. 
Since all  inclusions are  proper, it follows that  $t\leq |\spec(R)|-1$. The first part of the proof is now complete. 
The last statement follows immediately by noting that $\mbox{\rm\texttt{Rd}}({R})=\spec(R)$ if and only if $\spec(R)$ is linearly ordered. 
\end{proof}

Topologies on the family of the closed subsets of a topological space  were introduced and   intensively  studied since the beginning of 20th century,   with applications  to uniform spaces, Functional Analysis, Game Theory, etc.   \cite{En-He-Mi, Ku, Le-Lu-Pe,mi}.
In this circle of ideas, one of the first   contributions  was made by L. Vietoris in \cite{Vi}. We   briefly recall  his construction. Let $X$ be any topological space and let, as before, $\ch(X)$ denote the collection of all the nonempty closed subspaces of $X$   (called also the {\it hyperspace of} $X$).  For any open  subspace  $U$ of $X$ set
$$
U^+:=\{C\in \ch(X)\mid C\subseteq U \} \qquad \qquad U^-:=\{C\in\ch(X)\mid C\cap U\neq \emptyset \}
$$
The \emph{upper Vietoris topology on} $\ch(X)$ (respectively, \emph{lower Vietoris topology}) is the topology on $\ch(X)$ having as a basis (respectively, subbasis) of open sets the collection $\mathcal V^+:=\{U^+\mid U\mbox{ open in }X \}$ (respectively, $\mathcal V^-:=\{U^-\mid U\mbox{ open in }X  \}$). 

 We now unveil a relation between the lower Vietoris topology  and the Zariski topology $\ch(X)$, considered at the beginning of the present section. 

\begin{prop} \label{vietoris}
Let $X$ be a spectral space. Then, the inverse topology of the spectral space $\ch(X)^{\mbox{\tiny\rm{\texttt{zar}}}}$ and the lower Vietoris topology   on $\ch(X)$ are the same. 
\end{prop}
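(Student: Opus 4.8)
The plan is to compare the two topologies through their subbasic open sets, the bridge being the elementary set-theoretic identity
\[
\ch(X)\setminus\ucal'(\Omega)=\{C\in\ch(X)\mid C\cap\Omega\neq\emptyset\}=\Omega^-,
\]
valid for every quasi-compact open subspace $\Omega$ of $X$. This directly ties the Zariski subbasic opens $\ucal'(\Omega)$ (whose complements generate the inverse topology) to the lower Vietoris subbasic opens $\Omega^-$.

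First I would fix a convenient subbasis of open sets for $\ch(X)^{\mbox{\tiny\rm{\texttt{inv}}}}$. Recall that $\ch(X)^{\mbox{\tiny\rm{\texttt{zar}}}}$ is spectral, with the family $\{\ucal'(\Omega)\}$ as a basis of quasi-compact open subspaces (by its construction together with Corollary \ref{osservazione}). Consequently every quasi-compact open of $\ch(X)^{\mbox{\tiny\rm{\texttt{zar}}}}$ is a finite union $\ucal'(\Omega_1)\cup\cdots\cup\ucal'(\Omega_k)$. By definition the inverse topology has these quasi-compact opens as a basis of closed sets, so their complements form a basis of open sets; taking complements and applying the identity above, such a complement equals $\Omega_1^-\cap\cdots\cap\Omega_k^-$. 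Hence $\{\Omega^-\mid\Omega\text{ quasi-compact open in }X\}$ is a subbasis of open sets for $\ch(X)^{\mbox{\tiny\rm{\texttt{inv}}}}$.

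The two inclusions are then immediate. For the inclusion of $\ch(X)^{\mbox{\tiny\rm{\texttt{inv}}}}$ into the lower Vietoris topology, each subbasic inverse-open $\Omega^-$ is literally a lower Vietoris subbasic open, since a quasi-compact open $\Omega$ is in particular open in $X$. For the reverse inclusion I would take an arbitrary open $U\subseteq X$ and, using that $X$ is spectral and so admits a basis of quasi-compact opens, write $U=\bigcup_j\Omega_j$ with each $\Omega_j$ quasi-compact open. A closed set $C$ meets $U$ exactly when it meets some $\Omega_j$, whence
\[
U^-=\bigcup_j\Omega_j^-,
\]
which is open in $\ch(X)^{\mbox{\tiny\rm{\texttt{inv}}}}$. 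Thus every lower Vietoris subbasic open is inverse-open, and the two topologies coincide.

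I do not expect a genuine obstacle here: the argument is essentially bookkeeping of subbases. The only point requiring care is the correct description of the subbasis of open sets of the inverse topology, i.e.\ the passage from ``$\{\ucal'(\Omega)\}$ is a basis of quasi-compact opens of $\ch(X)^{\mbox{\tiny\rm{\texttt{zar}}}}$'' to ``$\{\Omega^-\}$ is a subbasis of opens of $\ch(X)^{\mbox{\tiny\rm{\texttt{inv}}}}$'', which rests on the complement identity and on decomposing each quasi-compact open of $\ch(X)^{\mbox{\tiny\rm{\texttt{zar}}}}$ as a finite union of the basic sets $\ucal'(\Omega)$.
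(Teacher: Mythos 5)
Your proof follows the same route as the paper's: the complement identity $\ch(X)\setminus\ucal'(\Omega)=\Omega^-$, the deduction that the sets $\Omega^-$ form a subbasis of open sets of $\ch(X)^{\mbox{\tiny\rm{\texttt{inv}}}}$, and the two inclusions (your decomposition $U^-=\bigcup_j\Omega_j^-$ is just a global rephrasing of the paper's pointwise argument for the inclusion of the lower Vietoris topology into the inverse topology).

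There is, however, a gap at the one step that genuinely requires an argument: the quasi-compactness of each $\ucal'(\Omega)$ in $\ch(X)^{\mbox{\tiny\rm{\texttt{zar}}}}$. You justify it ``by its construction together with Corollary~\ref{osservazione}'', but that corollary applies only to a subbasis $\mathcal S$ that has been shown to satisfy the ultrafilter condition of Theorem~\ref{spectral-ultra}(1,ii); for $\ch(X)$ with the subbasis $\{\ucal'(\Omega)\}$ this condition is verified nowhere --- the spectrality of $\ch(X)^{\mbox{\tiny\rm{\texttt{zar}}}}$ is obtained by transport of structure along the homeomorphism $\mathscr{J}$ of Theorem~\ref{prop:rad-x1}, not via the ultrafilter criterion, and knowing that a space is spectral does not by itself tell you that a particular basis of it consists of quasi-compact sets. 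The paper closes exactly this point with a short direct argument: if $\ucal'(\Omega)\neq\emptyset$, then $X\setminus\Omega$ is its largest element, so in any cover $\ucal'(\Omega)\subseteq\bigcup_{i}\ucal'(\Omega_i)$ by basic open sets one has $X\setminus\Omega\in\ucal'(\Omega_i)$ for some $i$, i.e.\ $\Omega_i\subseteq\Omega$, and then already $\ucal'(\Omega)\subseteq\ucal'(\Omega_i)$ for that single $i$. (Alternatively, quasi-compactness can be extracted from Theorem~\ref{prop:rad-x1} itself: $\mathscr{J}$ carries $\ucal'(\texttt{D}(x_1,\z,x_n))$ onto $\bV(x_1,\z,x_n)\cap\rd(R)$, the complement of a quasi-compact open subspace of $\rd(R)^{\mbox{\tiny\rm{\texttt{hk}}}}$, hence a quasi-compact open subspace of $\rd(R)^{\mbox{\tiny\rm{\texttt{inv}}}}$.) Once this point is supplied, your argument is complete and coincides with the paper's.
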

\begin{proof}
Note first that, for any spectral space  $\mathscr X$, if $\mathcal B$ is a basis of   quasi-compact open subspaces of   $\mathscr X$  (such a $\mathcal B$ exists, by definition of a spectral space), then $\mathcal B^{\mbox{\tiny\rm{\texttt{inv}}}}:=\{  \mathscr X \setminus B\mid B\in\mathcal B \}$ is a basis of open sets for   $\mathscr X^{\mbox{\tiny\rm{\texttt{inv}}}}$. 

  Starting from the given spectral space $X$, with the notation introduced at the beginning of the present section,  for any open and quasi-compact subspace  $\Omega$  of $X$,  we observe that  the set $\boldsymbol{\mathcal{U}^\prime}(\Omega)$ is quasi-compact, as a subspace of $\ch(X)^{\mbox{\tiny\rm{\texttt{zar}}}}$.  Indeed, note that  $X\setminus\Omega\in \boldsymbol{\mathcal{U}^\prime}(\Omega)$ and that,  if $\boldsymbol{\mathcal{U}^\prime}(\Omega)\subseteq \bigcup_{i\in I}\boldsymbol{\mathcal{U}^\prime}(\Omega_i)$, with $\Omega_i\subseteq X$ open and quasi-compact,  then $X\setminus \Omega\in \boldsymbol{\mathcal{U}^\prime}(\Omega_i)$, for some $i$, that is $\Omega_i\subseteq \Omega$. 
Thus, a fortiori, $\boldsymbol{\mathcal{U}^\prime}(\Omega)\subseteq \boldsymbol{\mathcal{U}^\prime}(\Omega_i)$. This shows that the basis $$\mathcal B:=\{\boldsymbol{\mathcal{U}^\prime}(\Omega)\mid \Omega \mbox{ quasi-compact open in }X \}$$
consists of    quasi-compact  open  subspaces of $\ch(X)^{\mbox{\tiny\rm{\texttt{zar}}}}$, and thus $\mathcal B^{\mbox{\tiny\rm{\texttt{inv}}}}$ is a basis of open sets for $\ch(X)^{\mbox{\tiny\rm{\texttt{inv}}}}$. 
Since, by definition, the typical element in $\mathcal B^{\rm inv}$ is a set of    closed subspaces hitting  a fixed   quasi-compact open subspace of $X$, it follows immediately that the inverse topology of $\ch(X)^{\mbox{\tiny\rm{\texttt{zar}}}}$ is coarser than (or equal to) the lower Vietoris topology. 

Conversely, let $U$ be any open set of $X$ and take a point $C\in U^-:=\{F\in \ch(X)\mid F\cap U\neq \emptyset \}$. If $x\in C \cap U$, there is a  quasi-compact open subspace $V$ of $X$ such that 
 $V \subseteq U$ and $x \in C\cap V$,
 since the collection of all the quasi-compact open subspaces of a spectral space forms a basis. 
Thus, $C\in V^-=\ch(X)\setminus \boldsymbol{\mathcal{U}^\prime}(V)\subseteq U^-$. This shows that $U^-$ is open, in the inverse topology of $\ch(X)^{\mbox{\tiny\rm{\texttt{zar}}}}$. The proof is now complete. 
\end{proof}

\begin{oss}  {\bf (a)}
The previous proposition shows that, given a spectral space $X$, the lower Vietoris topology on   $\ch(X)$ is always spectral.
 However, the same property   can fail to hold for the upper Vietoris topology. 
 To see this, let $D$ be  any  integral domain with  Jacobson radical  $\f J \neq (0)$, let $X:=\spec(D)$, let $Y:=\texttt{V}(\f J)\in\ch(X)$, and  let $\Omega\subseteq \ch(X)$ be any open neighborhood of $Y$, with respect to the upper Vietoris topology. Without loss of generality, we can assume that $\Omega=D(I)^+$, for some ideal $I$ of $D$. 
 Since each maximal ideal $M$ of $D$ belongs to $Y$, we have $I\nsubseteq M$, for each $M \in {\mbox{\rm{\texttt{Max}}}}(D)$ and thus $I=D$, that is, $\Omega=\ch(X)$. 
 This proves that the unique open neighborhood of $Y$ is $\ch(X)$ and trivially the same holds for the point $X\in \ch(X)$,   with $Y \neq X$ since $\f J \neq (0)$. This shows that  $\ch(X)$, equipped with the upper Vietoris topology, does not satisfy   the T$_0$ axiom and, a fortiori, it is not spectral. 
 
Note also that the previous example shows that the inverse topology of the spectral space $ \ch(X)$, endowed with the lower Vietoris topology, is not the upper Vietoris topology on $ \ch(X)$.

 {\bf (b)}
Following the idea of intertwining algebra and topology, it is possible to give an alternate proof of Proposition \ref{vietoris} based on Theorem \ref{prop:rad-x1}.

 Let $X=\spec(R)$, and let $\mathscr{J}_0$ be the map $\mathscr{J}$ defined in the statement of Theorem \ref{prop:rad-x1}, but considered as a map from $\ch(R)^{\mbox{\tiny\rm{\texttt{loV}}}}$ (i.e., the space $\ch(R)$ equipped with the lower Vietoris topology) to $\rd(R)^{\mbox{\tiny\rm{\texttt{hk}}}}$ (i.e., the space $\rd(R)$ equipped with the hull-kernel topology). Obviously, $\mathscr{J}_0$ is bijective.

A subbasis of the space $\rd(R)^{\mbox{\tiny\rm{\texttt{hk}}}}$ is composed by the sets of the form  $\bD(I)= \{ H \in \rd(R) \mid I\nsubseteq H\}$, as $I$ ranges among the ideals of $R$, while a subbasis of $\ch(R)^{\mbox{\tiny\rm{\texttt{loV}}}}$ is composed of the sets of the form $\bD(I)^- =\{ F \in \ch(R) \mid F\cap \bD(I) \neq \emptyset\}$, since the open sets of $\spec(R)$ are of the form $\bD(I)$. However,
\begin{equation*}
\begin{array}{rcl}
\mathscr{J}_0^{-1}(\bD(I)) & = & \{F\in\ch(R)^{\mbox{\tiny\rm{\texttt{loV}}}} \mid   I\nsubseteq P\text{~for some prime ideal~}P\in F\}=\\
 & = & \{F\mid F\not\subseteq\bV(I)\}=\\
 & = & \{F\mid F\cap\bD(I)\neq\emptyset\}=\bD(I)^-,
\end{array}
\end{equation*}
and thus $\mathscr{J}_0$ is a homeomorphism.

We thus have a chain of maps
\begin{equation*}
\ch(R)^{\mbox{\tiny\rm{\texttt{loV}}}}\xrightarrow{\mathscr{J}_0} \rd(R)^{\mbox{\tiny\rm{\texttt{hk}}}}\xrightarrow{\mathtt{id}} ((\rd(R)^{\mbox{\tiny\rm{\texttt{hk}}}})^{\mbox{\tiny\rm{\texttt{inv}}}})^{\mbox{\tiny\rm{\texttt{inv}}}}\xrightarrow{(\mathscr{J}^{-1})^{\mbox{\tiny\rm{\texttt{inv}}}}} \ch(R)^{\mbox{\tiny\rm{\texttt{inv}}}},
\end{equation*}
where $\mathtt{id}$ is the identity on the set $\rd(R)$ and $(\mathscr{J}^{-1})^{\mbox{\tiny\rm{\texttt{inv}}}}$ indicates the map $\mathscr{J}^{-1}$ in the inverse topology. By Hochster's duality, $\mathtt{id}$ is a homeomorphism, while $(\mathscr{J}^{-1})^{\mbox{\tiny\rm{\texttt{inv}}}}$ and $\mathscr{J}_0$ are homeomorphism, respectively, by Theorem \ref{prop:rad-x1} and the above reasoning. Since the composition $(\mathscr{J}^{-1})^{\mbox{\tiny\rm{\texttt{inv}}}}\circ\mathrm{id}\circ\mathscr{J}_0$ is clearly the identity on the set $\ch(R)$, we conclude that  the lower Vietoris topology and the inverse topology on  $\ch(R)$ are identical, as claimed.
\end{oss}

\newpage
\noindent{\bf Acknowledgement}

\noindent We sincerely thank the anonymous referee for the careful reading of the manuscript and for providing several constructive comments and help in improving the presentation of the paper.



\end{document}